\title[Fields of definition of rational curves]{Fields of definition of rational curves of a given degree}
\newcommand{\on}[1]{\operatorname{#1}}
\newcommand{\bb}[1]{{\mathbb{#1}}}
\newcommand{\ca}[1]{{\mathcal{#1}}}
\newcommand{\bd}[1]{{\mathbf{#1}}}
\newcommand{\sub}{\subseteq}
\theoremstyle{definition}
\newtheorem{definition}{Definition}[section]
\theoremstyle{plain}
\newtheorem{conjecture}[definition]{Conjecture}
\newtheorem{proposition}[definition]{Proposition}
\newtheorem{lemma}[definition]{Lemma}
\newtheorem{theorem}[definition]{Theorem}
\newtheorem{corollary}[definition]{Corollary}
\theoremstyle{remark}
\newtheorem{remark}[definition]{Remark}
\renewcommand{\phi}{\varphi}
\author{David Holmes and Nick Rome}
\date{\today}
\address{Mathematisch Instituut\\ 
Universiteit Leiden\\
Postbus 9512\\
2300RA Leiden\\
Netherlands}
\email{holmesdst@math.leidenuniv.nl}
\address{School of Mathematics\\
University of Bristol \\ Bristol\\ BS8 1TW\\ UK}
\email{nick.rome@bristol.ac.uk}
\newcounter{nootje}
\newcommand{\beq}{\begin{equation}}
\newcommand{\eeq}{\end{equation}}
\newcommand{\beqs}{\begin{equation*}}
\newcommand{\eeqs}{\end{equation*}}
\DeclareMathOperator{\Mod}{mod}
\renewcommand{\d}{e}
\begin{document}
\maketitle


\begin{abstract} 
Kontsevich and Manin gave a formula for the number $N_\d$ of rational plane curves of degree ${\d}$ through $3{\d}-1$ points in general position in the plane. When these $3{\d}-1$ points have coordinates in the rational numbers, the corresponding set of $N_{\d}$ rational curves has a natural Galois-module structure. We make some extremely preliminary investigations into this Galois module structure, and relate this to the deck transformations of the generic fibre of the product of the evaluation maps on the moduli space of maps. 

We then study the asymptotics of the number of rational points on hypersurfaces of low degree, and use this to generalise our results by replacing the projective plane by such a hypersurface. 
\end{abstract}



\newcommand{\Mtildes}{ \widetilde{\ca M}^\Sigma}
\newcommand{\sch}[1]{\textcolor{blue}{#1}}

\newcommand{\Mbar}{\overline{\ca M}}
\newcommand{\MD}{\ca M^\blacklozenge}
\newcommand{\Md}{\ca M^\lozenge}
\newcommand{\DRL}{\operatorname{DRL}}
\newcommand{\DR}{\operatorname{DR}}
\newcommand{\isom}{\stackrel{\sim}{\longrightarrow}}
\newcommand{\Ann}[1]{\on{Ann}(#1)}
\newcommand{\fm}{\mathfrak m}

\section{Introduction}
For a given positive integer ${\d}$, we write $N_{\d}$ for the finite number of rational (i.e. geometric genus 0) plane curves of degree ${\d}$ through $3{\d}-1$ points in general position in $\bb P^2_{\bb C}$. We see immediately that $N_1 = N_2 = 1$, and the number $N_3$ of singular plane cubics through $8$ points is easily shown to be 12. Zeuthen \cite{ZeuthenAlmindelige-Ege} proved in 1873 that $N_4 = 620$, and Ran \cite{Ran1989Enumerative-geo} and Vainsencher \cite{Vainsencher1995Enumeration-of-} showed in the early 1990s that $N_5 = 87,304$. Around the same time Kontsevich and Manin \cite{Kontsevich1994Gromov-Witten-c} proved a general recursive formula 
\begin{equation*} 
N_{\d} = \sum_{\stackrel{{\d}_A + {\d}_B = {\d}}{{\d}_A >0, {\d}_B>0}}N_{{\d}_A}N_{{\d}_B}{\d}_A^2{\d}_B\left(  {\d}_B\left( \substack{3{\d}-4\\3{\d}_A - 2}\right) - {\d}_A  \left( \substack{3{\d}-4\\3{\d}_A - 1}\right) \right), 
\end{equation*}
allowing the rapid computation of any $N_{\d}$. Their proof ran via the intersection theory on moduli spaces of stable maps, and has initiated a vast area of research generalising this to curves of higher genera, and to more geometrically interesting targets in place of $\bb P^2$. 

In this paper we take a more arithmetic viewpoint, and ask over what sub-fields of $\bb C$ we should expect these rational curves to be defined. The most basic version of the question is the following: suppose the $3{\d}-1$ points in general position all have coordinates in the rational numbers $\bb Q$. Should we then expect any or all of the $N_{\d}$ rational curves to be defined over $\bb Q$ (i.e. such that their defining equations can be chosen to have rational coefficients, or equivalently to arise by base-change from some curves over $\bb Q$)? The answer is trivially `yes' if ${\d}=1$ or $2$ (since $N_1 = N_2 = 1$), but we will see later that the answer is `no' for all higher ${\d}$:

\begin{theorem}
Let $\d >2$. Then the set of $(3{\d}-1)$-tuples of points in $\bb P^2(\bb Q)$ where at least one of the $N_{\d}$ rational curves is defined over $\bb Q$ forms a thin set. 
\end{theorem}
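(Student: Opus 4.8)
The plan is to reinterpret the statement ``at least one of the $N_\d$ rational curves through the points is defined over $\bb Q$'' as ``a fibre of a fixed dominant generically finite $\bb Q$-morphism of degree $\geq 2$ has a rational point'', and then to apply Serre's theory of thin sets. Rigidifying by forcing the first three marked points to be $0,1,\infty\in\bb P^1$, let $X$ be the open subvariety of $\bb P(H^0(\bb P^1,\ca O(\d))^{\oplus3})\times(\bb P^1)^{3\d-4}$ parametrising tuples $(\phi;t_4,\dots,t_{3\d-1})$, where $\phi\colon\bb P^1\to\bb P^2$ is a morphism of degree $\d$ and $t_4,\dots,t_{3\d-1}$ are pairwise distinct points of $\bb P^1\setminus\{0,1,\infty\}$. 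Then $X$ is a smooth, geometrically irreducible $\bb Q$-variety with $\dim X=(3\d+2)+(3\d-4)=6\d-2$, and it is a fine moduli space: a $\bb Q$-point of $X$ is exactly a degree-$\d$ map $\bb P^1\to\bb P^2$ over $\bb Q$ with $3\d-1$ distinct $\bb Q$-rational marked points (equivalently, $X$ is the smooth-domain locus of the Kontsevich space $\Mbar_{0,3\d-1}(\bb P^2,\d)$, whose objects carry no nontrivial automorphisms, there being $3\d-1\geq 8$ distinct marked points on an irreducible rational domain). Let $\on{ev}\colon X\to Y:=(\bb P^2)^{3\d-1}$ send $(\phi;t_\bullet)$ to $(\phi(0),\phi(1),\phi(\infty),\phi(t_4),\dots,\phi(t_{3\d-1}))$, a $\bb Q$-morphism with $\dim Y=2(3\d-1)=6\d-2=\dim X$. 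Since $3\d-1$ points in general position lie on at least one degree-$\d$ rational curve (indeed on $N_\d\geq1$ of them), $\on{ev}$ is dominant, hence generically finite; and since a general such curve is a nodal immersion, birational onto its image and with a unique rigidified parametrisation, $\on{ev}$ has degree exactly $N_\d$.

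\textbf{Arithmetic translation.} In characteristic $0$, generic smoothness together with the standard transversality underlying the Kontsevich--Manin count provides a dense open $Y^\circ\subseteq Y$, defined over $\bb Q$, over which: $\on{ev}$ is finite étale of degree $N_\d$, the configuration is in general position, and each of the $N_\d$ rational curves through it is a nodal immersed curve whose $3\d-1$ marked points are smooth, pairwise distinct, and disjoint from the nodes. Fix $y\in Y^\circ(\bb Q)$. The $N_\d$ rational curves through the corresponding configuration are the images of the $N_\d$ points of $\on{ev}^{-1}(y)$, and a point of residue field $\kappa$ has image defined over $\kappa$; in particular a $\bb Q$-point of $\on{ev}^{-1}(y)$ produces a curve over $\bb Q$. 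Conversely, if one of these $N_\d$ curves, say $C$, is defined over $\bb Q$, then its marked points are smooth $\bb Q$-points of $C$, so the normalisation $\widetilde C$ is a smooth genus-$0$ curve over $\bb Q$ possessing a $\bb Q$-point, whence $\widetilde C\cong\bb P^1_{\bb Q}$; rigidifying the (degree-$\d$) normalisation map $\widetilde C\to\bb P^2$ by the unique $\on{PGL}_2(\bb Q)$-element carrying the first three (rational) marked points to $0,1,\infty$ yields a $\bb Q$-point of $X$ lying over $y$. Hence, for $y\in Y^\circ(\bb Q)$, some curve through $y$ is defined over $\bb Q$ if and only if $y\in\on{ev}(X(\bb Q))$.

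\textbf{Conclusion, and where $\d>2$ enters.} For $\d>2$ one has $N_\d\geq2$ (already $N_3=12$; in general $N_\d\geq1$, and $N_\d\geq2$ whenever $\d\geq3$). So $\on{ev}\colon X\to Y$ is a dominant generically finite $\bb Q$-morphism of degree $\geq2$ from an irreducible variety, and therefore $\on{ev}(X(\bb Q))$ is a thin subset of $Y(\bb Q)$ in the sense of Serre; moreover $Y\setminus Y^\circ$ is a proper closed $\bb Q$-subvariety, so $(Y\setminus Y^\circ)(\bb Q)$ is thin as well. By the preceding paragraph the set in the statement is contained in $\on{ev}(X(\bb Q))\cup(Y\setminus Y^\circ)(\bb Q)$, a finite union of thin sets, hence thin. (Since $Y=(\bb P^2)^{3\d-1}$ is Hilbertian, its rational points are not thin, so the conclusion is genuinely restrictive.)

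\textbf{The main obstacle.} The only role of the hypothesis $\d>2$ is the inequality $\deg\on{ev}=N_\d\geq2$: for $\d\leq2$ we have $N_\d=1$, $\on{ev}$ is birational, and every curve is automatically defined over the base field. In particular one needs \emph{nothing} about the monodromy or deck group of $\on{ev}$ beyond $\deg\geq2$ --- the finer structure of that group (the topic of the rest of the paper) plays no role in this statement. The genuinely delicate step is the arithmetic translation: isolating a single dense open $Y^\circ$ on which the enumerative geometry is simultaneously transverse (reduced fibre, nodal immersed curves, marked points smooth, pairwise distinct, off the nodes) and on which $X$ is a \emph{fine} moduli space, so that a $\bb Q$-point of $X$ is literally a rational curve over $\bb Q$ --- but this is routine in the characteristic-$0$ Kontsevich--Manin setting.
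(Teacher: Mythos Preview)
Your proof is correct and follows essentially the same route as the paper: both use the evaluation map from the (irreducible) moduli space $\ca M_{0,3e-1}(\bb P^2,e)$ to $(\bb P^2)^{3e-1}$, observe that it is generically finite of degree $N_e\ge 2$, and conclude via Serre's thin-set formalism. The one minor difference is that the paper actually proves the stronger \ref{prop:not_transitive} (transitivity of the Galois action on the fibre outside a thin set) by invoking \cite[prop.~3.3.5]{Serre2016Topics-in-Galoi} and deduces the theorem as a corollary, whereas you argue directly from the definition of a type~II thin set, which is all the stated theorem requires; your explicit arithmetic translation --- a $\bb Q$-curve through the rational markings has normalisation $\bb P^1_{\bb Q}$ and hence lifts to a $\bb Q$-point of the moduli space --- is a detail the paper leaves implicit in the identification $C_P = ev^{-1}(P)$.
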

The notion of a \emph{thin set} is due to Serre; in \ref{sec:thin} we will recall the definition and prove a stronger version of the above theorem. The idea of a thin set is that it should contain `few' points; indeed, from the above we easily deduce in \ref{sec:asymptotics}: 
\begin{corollary}
Ordering all $(3{\d}-1)$-tuples of points in $\bb P^2(\bb Q)$ by height, the proportion of tuples where at least one of the $N_\d$ curves is defined over $\bb Q$ is $0\%$. 
\end{corollary}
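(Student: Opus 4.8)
The plan is to deduce the corollary from the Theorem together with a quantitative form of Hilbert's irreducibility theorem, which will show that a thin subset of the parameter space contains a vanishing proportion of the rational points of bounded height. A $(3\d-1)$-tuple of points of $\bb P^2(\bb Q)$ is precisely a rational point of the smooth projective rational variety $X \defeq (\bb P^2_{\bb Q})^{3\d-1}$, of dimension $6\d-2$, and ``ordering all tuples by height'' means ordering $X(\bb Q)$ by the height $H(p_1,\dots,p_{3\d-1}) = \prod_{i=1}^{3\d-1} H(p_i)$ attached to the line bundle $\ca O(1,\dots,1)$ --- any ample height yields the same $0\%$ conclusion. Write $\Theta \sub X(\bb Q)$ for the set of tuples over which at least one of the $N_\d$ rational curves is defined over $\bb Q$; the Theorem says precisely that $\Theta$ is a thin set.

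The Diophantine input has two parts. First, Schanuel's theorem for $\bb P^2$, together with a routine convolution over the $3\d-1$ factors, gives
\[
\#\{x \in X(\bb Q) : H(x) \le B\} \sim c\, B^{3}(\log B)^{3\d-2} \qquad (B \to \infty)
\]
for an explicit constant $c>0$. Second, a quantitative form of Hilbert's irreducibility theorem --- going back to Serre, with sharper exponents due to S.~D.~Cohen --- shows that the thin set $\Theta$ contains only $o\!\big(B^{3}(\log B)^{3\d-2}\big)$ points of height at most $B$. Dividing the two estimates, the proportion of $(3\d-1)$-tuples of height at most $B$ that lie in $\Theta$ tends to $0$ as $B\to\infty$, which is the statement of the corollary. (It is cleaner to phrase the second input, and hence the conclusion, purely as a ratio: $\#\{x \in \Theta : H(x)\le B\} = o\big(\#\{x \in X(\bb Q) : H(x)\le B\}\big)$.)

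The delicate point is the second estimate: the quantitative Hilbert irreducibility theorem is usually quoted for $\bb A^n$ or $\bb P^n$ with the standard height, whereas here we count on a product of $3\d-1$ projective planes with the multi-height $\prod_i H(p_i)$, whose total grows like $B^3(\log B)^{3\d-2}$ rather than like a single power $B^{\dim X+1}$. I would split a thin set into its two standard types. A piece contained in a proper closed subvariety of $X$ has a count of strictly smaller order --- at worst $O\big(B^3(\log B)^{3\d-3}\big)$, and smaller in higher codimension --- by a direct dimension-counting bound (this is also part of the known refined asymptotic for $(\bb P^2)^{3\d-1}$ with this height), so is negligible. A piece of the form $\pi(Y(\bb Q))$ with $\pi\colon Y\to X$ generically finite of degree $\ge 2$ is handled either by appealing to the version of quantitative Hilbert irreducibility already available for products of projective spaces, or by the usual inductive argument: fibre $X$ over $(\bb P^2)^{3\d-2}$, reduce the number of factors while keeping the quantitative estimates uniform in the resulting family, and iterate down to the base case $3\d-1=1$, which is Serre's theorem for $\bb P^2$. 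Making this uniform family version precise --- rather than any genuinely new idea --- is the main obstacle; everything else is routine.
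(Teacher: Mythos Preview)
Your overall strategy matches the paper's: reduce to showing that any thin subset of $X=(\bb P^2)^{3\d-1}$ has $o\bigl(\#\{x\in X(\bb Q):H(x)\le B\}\bigr)$ points of height at most $B$, split into type~I and type~II, and observe that the total count is $\asymp B(\log B)^{3\d-2}$ for the anticanonical height (equivalently $\asymp B^3(\log B)^{3\d-2}$ for your $\ca O(1,\dots,1)$-height; this is purely a normalisation). Your type~I argument is fine and agrees with the paper.

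The gap is exactly where you say it is. For type~II you offer two options---cite a quantitative Hilbert irreducibility theorem for products of projective spaces with the multi-height, or run an inductive fibration over $(\bb P^2)^{3\d-2}$ with uniformity in the base---but you do not carry out either. The first option is not in the standard literature in the form you need (Serre's and Cohen's bounds are for $\bb A^n$ or $\bb P^n$ with the single height), and the second requires a genuine uniformity statement for thin sets in families, which you correctly flag as ``the main obstacle'' but do not supply. As written, the proposal identifies the difficulty without resolving it.

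The paper avoids both options and instead proves the needed bound directly (its Lemma on $N(B)$): pass to the affine cone $(\bb Z_{\mathrm{prim}}^3)^{3\d-1}$, break the height condition $\prod_j\lVert\mathbf x^{(j)}\rVert_\infty\le B^{1/3}$ into dyadic boxes $B_j/2<\lVert\mathbf x^{(j)}\rVert_\infty\le B_j$, and on each box apply the multidimensional large sieve in lopsided boxes (Kowalski). The sieve input for type~II comes from Serre's bound $\#\overline A_p\le c\,p^{3(3\d-1)}+O(p^{3(3\d-1)-1/2})$ for a positive-density set of primes (Chebotarev), and the resulting sum $G(Q)$ is estimated by Wirsing's theorem to give $G(Q)\gg Q(\log Q)^{\delta\eta-1}$. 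Choosing $Q=\min_j B_j^{1/2}$ and summing over dyadic boxes yields $N(B)\ll B(\log B)^{3\d-3+\gamma}$ for some $\gamma\in[0,1)$, which is a genuine power-of-log saving over $M(B)$. This direct sieve computation is the substance you are missing; the dyadic decomposition plus lopsided sieve is precisely what handles the multi-height without any inductive fibration or external black box.
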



\newcommand{\f}{d}

Analogous questions can be asked about rational curves on hypersurfaces $X$ in $\bb P^N$ whose degree is low relative to their dimension. A lot is known about the irreducibility and dimension of the relevant moduli spaces, for generic $X$ by work of Harris--Roth--Starr \cite{Harris2004Rational-curves}, Beheshti--Kumar \cite{BeheshtiKumar} and Riedl--Yang \cite{RiedlYang}, and for any $X$ by work of Browning-Vishe \cite{BrowningVishe} and Browning--Sawin \cite{Browning2018Free-rational-c}. This allows us to prove analogously that very few $n$-tuples of points in $X$ are such that at least one element of the finite set of rational curves through them of suitable degree is defined over $\bb Q$.
\begin{theorem}
Let $X \sub \bb P^N$ be a smooth hypersurface of degree $\f$ such that ${({\f}-1)2^{{\f}}<N}$. Fix integers ${\d}>0$ and $n \ge 0$  such that the expression\\ ${(N + 1 - {\f})\d + (N - 4) + n = n(N-1)}$ holds.   
Then there exists a thin set $A \sub X^n(k)$ such that for all $P \in X^n(k) \setminus A$, the corresponding set of rational curves of degree $\d$ in $X$ through the points in $P$ contains no curve defined over $\bb Q$. 
\end{theorem}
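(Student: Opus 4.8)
The plan is to lift the statement to the Kontsevich space of stable maps and reduce it to a statement about images of rational points under a generically finite morphism, exactly as in the plane case above. Write $\Mbar_{0,n}(X,\d)$ for the moduli space of $n$-pointed genus-$0$ stable maps to $X$ of degree $\d$, defined over $\bb Q$, and let $\on{ev}=(\on{ev}_1,\dots,\on{ev}_n)\colon\Mbar_{0,n}(X,\d)\to X^n$ be the product of the evaluation maps. Suppose $P\in X^n(k)$ admits a degree-$\d$ rational curve $D\subseteq X$ through its points defined over $\bb Q$: then its normalisation $\widetilde D$ is a smooth genus-$0$ curve over $\bb Q$, and, provided $n\ge 1$ and $P$ avoids the (thin) set of tuples where some $P_i$ lies on the singular locus of its curve, the preimages of the $P_i$ are rational points of $\widetilde D$, so that $\widetilde D\cong\bb P^1_{\bb Q}$ and the normalisation morphism descends to $\bb Q$, yielding a $\bb Q$-point of $\Mbar_{0,n}(X,\d)$ lying over $P$. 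It therefore suffices to prove that $\on{ev}\bigl(\Mbar_{0,n}(X,\d)(\bb Q)\bigr)$ is thin in $X^n(k)$.

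For the geometric input I would invoke the cited results of Browning--Vishe and Browning--Sawin: the hypothesis $(\f-1)2^{\f}<N$ forces the space of degree-$\d$ maps $\bb P^1\to X$ to be irreducible of the expected dimension $(N+1-\f)\d+(N-1)$, with general member a free curve. It follows that $\Mbar_{0,n}(X,\d)$ is geometrically irreducible of dimension $(N+1-\f)\d+(N-4)+n$, and --- since at a free curve with distinct marked points $\on{ev}$ is submersive --- that $\on{ev}$ is dominant. The numerical hypothesis relating $n,\d,N,\f$ is precisely the statement that $\dim\Mbar_{0,n}(X,\d)=\dim X^n$, so $\on{ev}$ is generically finite; the boundary divisor, where the source of a stable map degenerates, then has strictly smaller dimension than $X^n$, so it maps to a proper closed subvariety and its rational points contribute only a thin set.

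To conclude I would appeal to Serre's theory of thin sets. The image $\on{ev}\bigl(\Mbar_{0,n}(X,\d)(\bb Q)\bigr)$ lies in $X^n(\bb Q)\subseteq X^n(k)$, and such a set is thin: if $k\neq\bb Q$ then $X^n(\bb Q)$ is already a thin subset of $X^n(k)$, and if the generic fibre of $\on{ev}$ has degree at least $2$ then the image of the $\bb Q$-points is thin of the second kind, being the image of the rational points of an irreducible variety under a dominant morphism of degree $\ge 2$. Together with the lower-dimensional and boundary contributions collected above, this yields the required thin set $A$.

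The step I expect to be the main obstacle is the careful bookkeeping behind the arithmetic dictionary of the first paragraph. Since $\Mbar_{0,n}(X,\d)$ is naturally a Deligne--Mumford stack, one must control the locus where a $\bb Q$-point of its coarse space does not lift to a $\bb Q$-point of the stack; one must also make precise, and show thin, each of the degenerate loci that get excised (where a marked point meets a singularity of its curve, or where the source of the stable map degenerates). By contrast, the irreducibility and expected-dimension statements that drive the dimension count are exactly what the cited work of Browning--Vishe and Browning--Sawin provides, so that ingredient, though deep, can be invoked off the shelf.
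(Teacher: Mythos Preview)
Your strategy matches the paper's: pass to a moduli space of pointed genus-$0$ maps, use the cited irreducibility and dimension results to see that the product of evaluation maps is generically finite from an irreducible variety, and conclude thinness. Two simplifications bring your argument in line with the paper's and dissolve the obstacles you flag at the end.

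First, the paper works with the \emph{open} moduli space $\ca M_{0,n}(X;\d)$ of maps with smooth source and fibrewise generically immersive $\phi$, not with $\Mbar_{0,n}(X,\d)$. Because such a map has no nontrivial automorphisms, this is already a fine moduli space and hence a variety; there is no coarse-space-versus-stack lifting issue and no boundary stratum to excise. The Browning--Sawin irreducibility and expected-dimension results apply directly to $\ca M_{0,0}(X;\d)$, and $\ca M_{0,n}(X;\d)$ is obtained from it by iterating the universal curve, so the complications in your final paragraph never arise.

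Second, rather than assembling the thin set by hand from type~I and type~II pieces, the paper invokes Serre's specialisation result \cite[prop.~3.3.5]{Serre2016Topics-in-Galoi}: for a generically finite dominant morphism from an irreducible variety, the set of $P$ over which the fibre fails to be irreducible (equivalently, where $\on{Gal}(\bar k/k)$ fails to act transitively on the $\bar k$-points of the fibre) is thin. When the generic degree exceeds $1$, transitivity forces every fibre outside this thin set to have no $k$-rational point, hence no curve defined over $k$ and a fortiori none over $\bb Q$. This gives the stronger ``transitive'' conclusion in one stroke and handles general Hilbertian $k$ directly, without your unsupported assertion that $X^n(\bb Q)$ is automatically thin in $X^n(k)$ whenever $k\neq\bb Q$.
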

Note that, unlike in the case of target $\bb P^2$, it is not always the case that we can fix the other variables and then find a value of $n$ which works. This is because we have to arrange that a certain product of evaluation maps is finite so that these sets of rational curves are finite, and this is not always possible for arbitrary choices of $N$, $\f$ and $\d$.

Since weak approximation is known to hold for such hypersurfaces \cite{Skinner}, it follows from \cite[theorem 3.5.7]{Serre2016Topics-in-Galoi} that the set of rational points is not thin. This means that, by the previous theorem, there must exist points $P \in X^n(k)$ for which none of the curves through $P$ are defined over $\mathbb{Q}$. In \ref{sec:thin points} we give a more refined quantitative estimate for the number of points which lie in any given thin set. Combining this with the above theorem yields:
\begin{corollary}
Under the above hypotheses, and assuming that $X(\bb Q)$ is non-empty, the proportion of $n$-tuples of points in $X(\bb Q)$ for which at least one of the rational curves is defined over $\bb Q$ is $0\%$. 
\end{corollary}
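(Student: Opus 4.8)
The strategy is short: the preceding theorem produces a thin set containing all the ``bad'' tuples, and one then invokes a density-zero estimate for thin subsets of $X^n(\bb Q)$, which is the real content and would be proved in \ref{sec:thin points}. In detail, write $A_{\mathrm{bad}} \subseteq X^n(\bb Q)$ for the set of $n$-tuples $P$ through which at least one degree-$\d$ rational curve in $X$ is defined over $\bb Q$; the preceding theorem gives a thin set $A$ with $A_{\mathrm{bad}} \subseteq A$. Fixing a height $\mathrm{H}$ on $X^n$ --- for concreteness the coordinatewise maximum of the heights on the $n$ factors --- it therefore suffices to prove that every thin subset $T \subseteq X^n(\bb Q)$ satisfies
\[
\#\{P \in T : \mathrm{H}(P) \le B\} = o\bigl(\#\{P \in X^n(\bb Q) : \mathrm{H}(P) \le B\}\bigr) \quad\text{as } B \to \infty,
\]
and to apply this with $T = A$.

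For the denominator: the hypothesis $(\f-1)2^{\f} < N$ places $X \subseteq \bb P^N$ comfortably inside the range where Birch's circle method applies, so $\#\{x \in X(\bb Q) : \mathrm{H}(x) \le B\} = c_X B^{\,N+1-\f} + O(B^{\,N+1-\f-\delta})$ for some $\delta > 0$, where $c_X$ is a product of local densities; since $X(\bb Q) \ne \emptyset$ every local density is positive and hence $c_X > 0$ (this is exactly where the hypothesis $X(\bb Q) \ne \emptyset$ enters). Taking products over the $n$ factors, $\#\{P \in X^n(\bb Q) : \mathrm{H}(P) \le B\} \asymp B^{\,n(N+1-\f)}$.

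For the numerator: split $T$ into finitely many pieces, each either (i) contained in a proper closed subvariety $Z \subsetneq X^n$, or (ii) contained in $\pi(Y(\bb Q))$ for an integral variety $Y$ and a dominant, generically finite morphism $\pi \colon Y \to X^n$ of degree $\ge 2$. For (i) one applies uniform upper bounds for rational points of bounded height on subvarieties --- the dimension-growth estimates of Salberger, after Browning--Heath-Brown --- to get $\#\{P \in Z : \mathrm{H}(P) \le B\} = O(B^{\,n(N+1-\f)-\eta})$ for some $\eta > 0$; one must check, against the precise Birch main term, that a proper subvariety of $X^n$ really contributes fewer points by a power of $B$ (as already for $n=1$, where a hyperplane section of $X$ is a smooth degree-$\f$ hypersurface in $\bb P^{N-1}$, counted by $B^{\,N-\f}$). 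For (ii) one runs a Chebotarev / large-sieve argument: spread $\pi$ out over a $\bb Z$-model, reduce modulo a large family of primes $p$ of good reduction, and use that $\deg \pi \ge 2$ forces, for a positive proportion of such $p$, a positive-density subset of $X^n(\bb F_p)$ not to lift to $Y(\bb F_p)$; the level-of-distribution input needed to run the sieve is the equidistribution of $X(\bb Q)$-points of bounded height in residue classes, which again is furnished by the circle method (Birch's theorem with congruence conditions). Combining (i) and (ii) gives $\#\{P \in T : \mathrm{H}(P) \le B\} = o(B^{\,n(N+1-\f)})$, which with the denominator estimate yields the displayed bound, and hence the corollary.

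The main obstacle is step (ii): Serre's classical thin-set density bound lives over projective space, and moving it onto the hypersurface $X$ and its powers forces the sieve to be built on top of Birch-type inputs, so the whole argument must be arranged so that the equidistribution actually available from the circle method suffices to drive it. A secondary difficulty is making the subvariety bound in (i) uniform over the finitely many $Z$ and sharp enough in the exponent that its contribution is genuinely of smaller order than $B^{\,n(N+1-\f)}$ for every admissible $(N,\f,\d,n)$.
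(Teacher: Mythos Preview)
Your overall architecture is correct and matches the paper: the preceding theorem supplies a thin set $A$ containing all bad tuples, and the corollary follows once one knows that thin subsets of $X^n(\bb Q)$ have density zero. Your handling of type~II is essentially the paper's: a large/Selberg sieve driven by equidistribution of $X$-points in residue classes, the latter coming from a Birch-type circle-method asymptotic (in the paper, the Schindler--Sofos ``lopsided Birch'' lemma). That is the heart of the matter and you have it right.

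Two points of divergence are worth noting. First, you order by the coordinatewise maximum height, whereas the paper uses the anticanonical (product) height on $X^n$, so that the total count is $\asymp B(\log B)^{n-1}$ rather than $B^{\,n(N+1-\f)}$; either normalisation suffices for the bare ``proportion is $0\%$'' statement, but the product height is what makes the paper's log-power saving in \ref{thin theorem} natural.

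Second, and more substantively, your treatment of type~I has a gap. Dimension-growth estimates bound $\#\{P \in Z:\mathrm H(P)\le B\}$ by roughly $B^{\dim Z+\epsilon}$, but a proper closed $Z\subset X^n$ can have $\dim Z$ as large as $n(N-1)-1$, which for $\f\ge 3$ already exceeds the Birch exponent $n(N+1-\f)$; so dimension growth alone does not beat the main term. The hyperplane-section example you give is atypically favourable --- general subvarieties of $X$ are not complete intersections amenable to Birch. The paper sidesteps this entirely: it treats type~I with the \emph{same} sieve as type~II, feeding in the Lang--Weil bound $\#\overline{Z}_p \ll p^{\dim \widehat{X^n}-1}$ for the mod-$p$ fibres, which yields $G(Q)\gg Q^{2-\epsilon}$ and hence a genuine saving. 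This is both simpler and stronger than appealing to dimension growth, and removes exactly the ``secondary difficulty'' you flagged.
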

Informally, this means that `almost all' of the sets of rational curves do not contain any curve defined over $\bb Q$. In fact, \ref{thin theorem} is stronger than this, proving a power saving in the count for rational points in a thin set. The key tool in the proof is a sieve result for points on hypersurfaces which may be thought of as a form of effective strong approximation, and which the authors believe will also be of independent interest.

The questions we ask in this paper seem natural from an arithmetic perspective, but do they have interesting geometric content? As we will see in the proof of \ref{prop:not_transitive}, what we are really studying is the group of deck transformations of the generic fibre of the product of the evaluation maps on the moduli space of maps to $\bb P^2$ of given degree. Kontsevich and Manin established that this cover is of degree $N_{\d}$, but this leaves open the question of the structure of the group of deck transformations.

\subsection{Splitting fields}

Given a set $P$ of $3{\d}-1$ $\bb Q$-points of $\bb P^2$ in general position, write $C_P$ for the set of $N_{\d}$ (complex) rational plane curves of degree $\d$ through $P$.  We write $L_P \sub \bb C$ for the splitting field of the Galois module $C_P$, i.e. the smallest sub-field of $\bb C$ such that every curve in $C_P$ arises by base-change from some plane curve over $L_P$. Naively, we can think of $L_P$ as the field generated by the coefficients of defining equations for the curves, after scaling these equations to have at least one coefficient in $\bb Q$. 

The field extension $L_P/\bb Q$ is necessarily finite and Galois (in other words, the fixed field of the automorphism group of $L_P$ over $\bb Q$ is $\bb Q$ itself). We can ask about the degree of $L_P$ over $\bb Q$ or (for a finer invariant) its Galois group $\on{Gal}(L_P/\bb Q)$. At one extreme we could have $L_P  = \bb Q$, in other words all $C_P$ are defined over $\bb Q$. At the other extreme it could have $L_P$ of degree $N_{\d}!$ and Galois group the symmetric group $S_{N_{\d}}$ on $N_{\d}$ objects. We conjecture that the latter occurs `almost always'. More precisely, we propose:
\begin{conjecture}\label{conj:S_N_d}
For all $P$ outside a thin subset of $(\bb P^2)^{3{\d}-1}(\bb Q)$, we have $\on{Gal}(L_P/\bb Q) = S_{N_{\d}}$. 
\end{conjecture}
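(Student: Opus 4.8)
The plan is to recast the conjecture via the monodromy of the product of evaluation maps
\[
  \on{ev}\colon \Mbar_{0,3\d-1}(\bb P^2,\d)\lra (\bb P^2)^{3\d-1},
\]
and then feed this into Hilbert irreducibility. We work in characteristic $0$, where by Kontsevich--Manin \cite{Kontsevich1994Gromov-Witten-c} the map $\on{ev}$ is generically finite of degree $N_\d$, hence generically \'etale, and $\Mbar_{0,3\d-1}(\bb P^2,\d)$ is geometrically irreducible by Fulton--Pandharipande. Let $K=\bb Q\bigl((\bb P^2)^{3\d-1}\bigr)$; the generic fibre of $\on{ev}$ is the spectrum of an \'etale $K$-algebra of degree $N_\d$, whose Galois closure has group $G^{\on{arith}}\le S_{N_\d}$. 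Since $\bb Q$ is Hilbertian, for all $P$ outside a thin subset of $(\bb P^2)^{3\d-1}(\bb Q)$ this \'etale algebra specialises, with unchanged Galois group, to the \'etale $\bb Q$-algebra whose points are the curves of $C_P$; that is, $\on{Gal}(L_P/\bb Q)=G^{\on{arith}}$ for such $P$ (see \cite{Serre2016Topics-in-Galoi}). As the geometric monodromy group $G\le S_{N_\d}$ (the monodromy over $\bb C$) is a normal subgroup of $G^{\on{arith}}$, it suffices to prove $G=S_{N_\d}$.

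To get at $G$, I would pull $\on{ev}$ back along a general line $L\cong\bb P^1$ in a projective embedding of $(\bb P^2)^{3\d-1}$. Writing $B$ for the branch locus of $\on{ev}$, the Lefschetz-type surjectivity $\pi_1(L\setminus B)\twoheadrightarrow\pi_1\bigl((\bb P^2)^{3\d-1}\setminus B\bigr)$ leaves the monodromy group unchanged, so one obtains a degree-$N_\d$ cover of $\bb P^1$, connected because $G$ is transitive (again the irreducibility of the moduli space). The monodromy group of such a cover is generated by the local monodromies around its branch points, and a transitive subgroup of $S_{N_\d}$ generated by transpositions is all of $S_{N_\d}$. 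Hence the conjecture reduces to the claim that $\on{ev}$ is \emph{simply branched}: over a general point of $B$, exactly two of the $N_\d$ curves through $P$ collide, and with ramification index $2$, so that the corresponding local monodromy is a transposition.

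This last claim is where the genuine difficulty --- and the geometric content --- lies, and is the reason we state only a conjecture. It is a genericity assertion about the ramification of the evaluation map, to be approached through the deformation theory of maps $\bb P^1\to\bb P^2$: one must rule out that, at a general point of $B$, two disjoint pairs of the $N_\d$ curves are forced to collide simultaneously, or that a single pair collides with ramification index $\ge 3$. We expect this to be checkable for small $\d$ by explicit computation --- for $\d=3$ it concerns the degree-$12$ cover whose fibre over $P$ is the set of $12$ nodal members of the pencil of cubics through $8$ general points, a natural first test case --- but we see no way to control it uniformly in $\d$. An alternative route would replace ``simply branched'' by ``primitive, with at least one transposition in the monodromy'', and then invoke Jordan's theorem that a primitive permutation group containing a transposition is the full symmetric group; producing a single transposition is comparatively easy, but proving primitivity seems no easier, since one would have to show in particular that the boundary degenerations underlying the Kontsevich--Manin recursion --- in which a degree-$\d$ rational curve splits off a component of some lower degree $\d_A$ --- do not partition the $N_\d$ curves into blocks. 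Either way, supplying this geometric input is the crux.
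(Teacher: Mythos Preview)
The statement is a \emph{conjecture} in the paper, not a theorem, and the paper does not claim a proof for general $\d$; it is asserted only for $\d\le 2$ (trivially) and $\d=3$. Your write-up is honest about this: you give a reduction and a program, not a proof. Your first paragraph --- reducing via Hilbert irreducibility to the statement that the Galois group of the generic fibre of $ev\colon \ca M_{0,3\d-1}(\bb P^2,\d)\to(\bb P^2)^{3\d-1}$ is $S_{N_\d}$ --- is correct and is exactly the observation the paper records after \ref{prop:not_transitive}.

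Where you diverge is in the strategy for attacking the generic Galois group. You propose to restrict $ev$ to a general line and argue that the cover of $\bb P^1$ is simply branched (or primitive with a transposition), reducing everything to a ramification analysis of $ev$ over the generic point of its branch divisor. The paper, for $\d=3$, does something cheaper and orthogonal: it exhibits a single $8$-tuple $P\in(\bb P^2)^8(\bb Q)$ in the \'etale locus, writes down the pencil of cubics through $P$, computes its discriminant (a degree-$12$ polynomial in the pencil parameter), and verifies in {\tt MAGMA} that this polynomial has Galois group $S_{12}$. Since $\on{Gal}(L_P/\bb Q)$ injects into the Galois group of the generic fibre, one such $P$ forces the generic group to be $S_{12}$, and Hilbert irreducibility finishes. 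This specialisation trick is far more practical for any fixed small $\d$ than your branching analysis --- no geometry of the branch locus is needed, only a single lucky point --- but, as you correctly note, it gives no uniform handle on $\d$; your simply-branched reduction, if it could be established, would.
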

See \ref{sec:thin} for the definition of a thin set. The conjecture is trivially true for ${\d}=1$ and $2$, and we prove it in \ref{sec:d_3} for ${\d}=3$. 

%
%
%
%


\subsection{Further questions}

There are a number of possible variations and extensions on the questions proposed in this note. One can ask whether the 0-dimensional schemes $C_p$ satisfy the Hasse Principle, or whether there is a Brauer-Manin obstruction. 

It also seems to be interesting to understand what happens in positive characteristic, but to the authors' knowledge even the number of rational curves has not been determined over $\bar{\bb F}_p$ (it is clear that the number coincides with that over $\bb C$ for `large enough' $p$, but making this `large enough' explicit, and understanding what happens for small $p$, seems to remain open). 

It may well be possible to extend the computations in \ref{sec:d_3} to ${\d}=4$ or maybe even ${\d}=5$, though the authors do not have the courage to attempt it. It is clear that other techniques will be needed in the general case. 

\subsection{Atttribution}
Sections 1 - 4 (excluding 2.1) are due to the first-named author. Sections 5 and 2.1 are due to the second-named author. 
\subsection{Acknowledgements}
Both authors are grateful to Tim Browning for putting them in contact with one another, and for helpful comments. 
\subsection{Notation}
We shall write $\bb Z^n_{\text{prim}}$ to denote the set of primitive vectors in $\bb Z^n$. A sum with subscript ``dyadic" will refer to a sum whose variables run over powers of 2. As is standard, we will write $f(x) \ll g(x)$ to mean that there exists some constant $c>0$ such that for all sufficiently large $x$, we have $\vert f(x) \vert \leq c \vert g(x)\vert$, and $f(x) \asymp g(x)$ to mean $f(x) \ll g(x) \ll f(x)$. 

\section{Most sets of $N_{\d}$ curves contain no curve defined over the ground field}
\label{sec:thin}
Let $k$ be a field of characteristic zero, and $X/k$ a reduced separated scheme of finite type (a \emph{variety}). For $X$ an irreducible variety, a subset $A \sub X(k)$ is called \emph{thin} if there exists a map of varieties $\pi\colon Y \to X$, not admitting a rational section, and such that $A \sub \pi(Y(k))$. The field $k$ is \emph{Hilbertian} if $\bb P^1(k)$ is not thin (as a subset of itself). The field $\bb Q$ is Hilbertian, as is any finitely generated extension; $\bb C$ is not. 

\begin{remark}
If there exists a closed subset $Z \subset X$ with $Z \neq X$ and $A \subset Z(k)$ then we refer to $A$ as a thin set of type I.
If there is some irreducible variety $Y$ with $\text{dim}(X) = \text{dim}(Y)$ and a generically surjective morphism $\pi: Y \rightarrow X$ of degree $\geq 2$ with $A \subset \pi(Y(k))$ then $A$ is referred to as a thin set of type II.
Any thin subset $A \subset X(k)$ may be written as a finite union of thin sets of type I and type II.
\end{remark}

Let $k \sub \bb C$ be a Hilbertian field, and fix a positive integer ${\d}$. Suppose we are given $P \in (\bb P^2)^{3{\d}-1}(k)$; we can think of this as a $(3{\d}-1)$-tuple of $k$-points in $\bb P^2$, and we write $C_P$ for the set of rational curves of degree ${\d}$ through $P$ defined over $\bar k$; in general this set may be infinite. We say $P$ is \emph{transitive} if 
\begin{itemize}
\item
the set $C_P$ has $N_{\d}$ elements (this holds if $P$ is in `general position'), and 
\item 
the Galois group $\on{Gal}(\bar k /k)$ acts transitively on the set $C_P$ (equivalently, the Galois group of the splitting field $L_P$ of $C_P$ is a transitive subgroup of the symmetric group on $N_{\d}$ elements). 
\end{itemize}
Note that $P$ being transitive implies in particular that none of the curves in $C_P$ is defined over $k$, for ${\d} >2$. 
\begin{proposition}\label{prop:not_transitive}
The set of $P \in (\bb P^2)^{3{\d}-1}(k)$ which are not transitive form a thin set. 
\end{proposition}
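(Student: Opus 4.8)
The plan is to produce a single irreducible variety mapping to $(\bb P^2)^{3{\d}-1}$ whose $k$-points, together with a type-I thin set accounting for the `bad' locus, cover all non-transitive $P$. The natural candidate is the Kontsevich moduli space of stable maps $\Mbar_{0,3{\d}-1}(\bb P^2,{\d})$ of genus-$0$ degree-${\d}$ maps with $3{\d}-1$ marked points, equipped with the product of the $3{\d}-1$ evaluation morphisms $\on{ev}\colon \Mbar_{0,3{\d}-1}(\bb P^2,{\d}) \to (\bb P^2)^{3{\d}-1}$. By Kontsevich--Manin this morphism is generically finite of degree $N_{\d}$ onto its (dense) image, indeed onto all of $(\bb P^2)^{3{\d}-1}$ since both sides have dimension $3{\d}-1+2{\d}-1 \cdot(\ldots)$ — more precisely the source has dimension $\dim \Mbar_{0,0}(\bb P^2,{\d}) + (3{\d}-1) \cdot 2 \cdot \tfrac{?}{} $; the point is simply that source and target have equal dimension $2(3{\d}-1)$ and the map is dominant, hence generically finite.

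First I would pass to a nonempty open $U \sub (\bb P^2)^{3{\d}-1}$ over which $\on{ev}$ restricts to a finite étale cover $V \to U$ of degree $N_{\d}$; this exists because we are in characteristic zero and the generic fibre is reduced of cardinality $N_{\d}$ (one has to know the generic fibre is reduced — this follows from generic smoothness, again using characteristic zero, applied to a component of $\Mbar$ dominating the target, after discarding the lower-dimensional boundary components and the locus where the map is not an immersion onto a degree-${\d}$ rational curve). Let $W$ be the Galois closure of $V \to U$, an irreducible variety with a map $\pi\colon W \to U$; because the monodromy (= Galois group of the generic fibre) acts transitively on the $N_{\d}$ sheets, the cover $V \to U$ has no section unless $N_{\d} = 1$, so for ${\d} > 2$ the map $W \to U$ (equivalently $V \to U$) admits no rational section and exhibits a thin set. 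Now for $P \in U(k)$: the fibre $\on{ev}^{-1}(P)$ is precisely (a torsor under $\mathrm{Gal}$ parametrising) the set $C_P$, which therefore has exactly $N_{\d}$ elements, and $\on{Gal}(\bar k/k)$ acts transitively on $C_P$ if and only if $P$ lifts to $W(k)$ — equivalently, $P \in U(k) \setminus \pi(W(k))$ forces non-transitivity only via the complement, so the set of non-transitive $P$ inside $U(k)$ is contained in $U(k) \setminus \pi(W(k))$...

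— let me restate this cleanly: the non-transitive $P \in U(k)$ are exactly those for which the $\mathrm{Gal}(\bar k/k)$-set $\on{ev}^{-1}(P)$ is not a single orbit, which is exactly the set of $P$ not in the image of any connected component of any intermediate cover corresponding to a transitive subgroup; it is a standard consequence of Hilbert irreducibility (see Serre, \emph{Topics in Galois theory}, \S 3.3, or \S 9.2) that this set is thin. Concretely, it is contained in $\bigcup_{G' \subsetneq G \text{ maximal, not transitive}} \pi_{G'}(W_{G'}(k))$ where $W_{G'} = W/G'$ and $\pi_{G'}\colon W_{G'} \to U$, each of which is a generically finite dominant map of degree $[G:G'] \ge 2$ with no rational section, hence each contributes a thin set of type II; there are finitely many such $G'$. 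Finally, the non-transitive $P$ in all of $(\bb P^2)^{3{\d}-1}(k)$ are the union of this thin subset of $U(k)$ with $((\bb P^2)^{3{\d}-1} \setminus U)(k)$, and the latter is a thin set of type I since $(\bb P^2)^{3{\d}-1} \setminus U$ is a proper closed subscheme. A finite union of thin sets is thin, completing the proof.

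\textbf{Main obstacle.} The genuinely substantive point is verifying that $\on{ev}$ is generically finite of degree exactly $N_{\d}$ onto a dense open, with \emph{reduced} generic fibre, so that the fibre over a general $k$-point really is the $N_{\d}$-element set $C_P$ in the sense defined in the text (rational curves of degree ${\d}$, not degenerate stable maps). This requires care with the boundary of $\Mbar_{0,3{\d}-1}(\bb P^2,{\d})$ and with multiple covers / maps whose image is a lower-degree curve; one restricts to the open substack of maps birational onto a nodal degree-${\d}$ rational plane curve, checks it still dominates the target, and invokes generic smoothness in characteristic zero. Everything after that — taking Galois closures, extracting thin sets from covers without sections, and assembling a finite union — is formal and is precisely the content of the Hilbert-irreducibility package recalled in Serre's book.
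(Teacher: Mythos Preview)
Your approach is correct and is essentially the paper's: irreducibility of the moduli space plus Serre's Hilbert-irreducibility package (the paper simply cites \cite[prop.~3.3.5]{Serre2016Topics-in-Galoi} rather than unpacking it via Galois closures and intermediate covers). Your ``main obstacle'' evaporates if, as the paper does, you work with the \emph{open} moduli space $\ca M_{0,3{\d}-1}(\bb P^2,{\d})$ of generically immersive maps from a smooth genus-$0$ curve rather than the compactification $\Mbar$; irreducibility there is immediate, since the space of degree-${\d}$ maps $\bb P^1 \to \bb P^2$ is an open subset of a projective space, and there is then no boundary or multiple-cover locus to excise.
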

\begin{proof}We may and do assume that ${\d} >2$, otherwise the result is obvious. The result is immediate from a lemma of Serre, after the standard reformulation of the counting problem into a moduli problem. We write $\ca M_{0,3{\d}-1}(\bb P^2, {\d})$ for the moduli space whose $T$-points are tuples $(C/T, x_1, \dots, x_{3{\d}-1}, \phi)$ with $C/T$ a smooth curve of genus 0, the $x_i \in C(T)$ disjoint sections, and $\phi\colon C \to \bb P^2$ is fibrewise generically immersive and satisfies $\phi^*\ca O_{\bb P^2}(1) \cong \ca O_{\bb P^1}({\d})$. It is easy to check that $\ca M_{0,3{\d}-1}(\bb P^2, {\d})$ is an \emph{irreducible} variety. 

This space $\ca M_{0,3{\d}-1}(\bb P^2, {\d})$ comes with evaluation maps $ev_i$ to $\bb P^2$ for $1 \le i \le 3{\d}-1$, sending $(C/T, x_1, \dots, x_{3{\d}-1}, \phi)$ to $\phi\circ x_i$. Together these $ev_i$ induce a map $ev\colon \ca M_{0,3{\d}-1}(\bb P^2, {\d}) \to (\bb P^2)^{3{\d}-1}$, generically finite of degree $N_{\d}$, and $C_P = ev^{-1}(P)$. 

Denote by $\eta$ the generic point of $(\bb P^2)^{3{\d}-1}$, then the fibre $ev^{-1}\eta$ is irreducible since $\ca M_{0,3{\d}-1}(\bb P^2, {\d})$ is. Then by \cite[prop. 3.3.5]{Serre2016Topics-in-Galoi} 
there exists a thin set $A \sub (\bb P^2)^{3{\d}-1}(k)$ such that for all $P$ outside $A$, the fibre $ev^{-1}P$ has the expected number $N_{\d}$ of $\bar k$-points and is irreducible, so the Galois action is transitive. 
\end{proof}
In fact, the same proof shows more: we can choose the thin set $A$ such that for every $P$ outside $A$, the Galois group of $L_P$ is naturally isomorphic to that of the generic fibre $ev^{-1}\eta / \eta$. So to prove \ref{conj:S_N_d} it would be equivalent to show that the Galois group of the generic fibre $ev^{-1}\eta$ were the full symmetric group $S_{N_{\d}}$. 


\subsection{Asymptotics}\label{sec:asymptotics}
In this subsection we take $k = \bb Q$. There are a number of senses in which thin sets contain `few' points. One of them is by counting the number of points up to a given size. For a positive integer $B$ we define $M(B)$ to be the number of points in $(\bb P^2)^{3{\d}-1}(\bb Q)$ with height bounded by $B$. We choose to use here an anticanonical height on $(\bb P^2)^{3e-1}$.
If $x=(x_0: x_1:x_2) \in \bb P^2$ and $\mathbf{x} =(x_0, x_1, x_2) \in  \bb Z_{\text{prim}}^3$, then the height of $x$ associated to the anticanonical bundle $\mathcal{O}(3)$ is given by
\[ H(x) = \vert \vert \mathbf{x} \vert \vert_{\infty}^{3},\] where $\vert \vert \mathbf{x} \vert \vert_{\infty} = \max \limits_i \vert x_i \vert.$
The height on $(\bb P^2)^{3e-1}$ is then inherited from this since the anticanonical bundle on the product is $\mathcal{O}(3,3, \dots, 3)$ and hence an anticanonical height on $(\bb P^2)^{3e-1}$ is given by a product of the heights on each copy of $\bb P^2$.
If $x \in (\bb P^2)^{3{\d}-1}(\bb Q)$ there is a representative of $x$ of the form $(\mathbf{x}^{(1)}, \dots, \mathbf{x}^{(3e-1)}) \in(\bb Z_{\text{prim}}^3)^{3e-1}$, the height of $x$ is given by 
$$H(x) = \prod_{j=1}^{3e-1} \vert \vert \mathbf{x}^{(j)} \vert \vert_{\infty}^3.$$
By the compatibility of Manin's conjecture with taking products (initially observed in \cite[section 1]{FMT}), we have $M(B) \asymp B(\log B)^{3e-2}$.

We define $N(B)$ to be the number of points $P$ of height at most $B$ which are {\em not} transitive.

\begin{lemma}\label{P2 count}
There exists $\gamma \in [0,1)$ such that
\[
N(B) = O\left( B (\log B)^{3e-3+\gamma} \right).
\]
\end{lemma}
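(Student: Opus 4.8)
The plan is to combine \ref{prop:not_transitive} — which tells us that the non-transitive locus is a thin subset of $(\bb P^2)^{3e-1}(\bb Q)$ — with the count $M(B)\asymp B(\log B)^{3e-2}$ recalled above. It is then enough to prove the general estimate that for every thin $A\subseteq(\bb P^2)^m(\bb Q)$ there is a constant $\gamma=\gamma(A)\in[0,1)$ with $\#\{P\in A:H(P)\le B\}=O(B(\log B)^{m-2+\gamma})$, and to apply this with $m=3e-1$. I expect $\gamma=0$ to be attainable with enough care, but $\gamma<1$ is precisely what is needed for the $0\%$ statement, since $B(\log B)^{3e-3+\gamma}=o(M(B))$ if and only if $\gamma<1$, and the weaker form leaves room for the $\varepsilon$-losses in the sieve steps. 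Using that every thin set is a finite union of sets of type I and type II (the remark after the definition of thinness), I would argue by induction on $m$. The base case $m=1$ needs a genuine \emph{power} saving on $\bb P^2$ — not merely a power of $\log$ — so that the convolution in the inductive step survives: in the anticanonical normalisation $H=H_{\mathrm{naive}}^{3}$ a line has $\ll B^{2/3}$ rational points of height $\le B$ while curves of higher degree have far fewer (by standard estimates for rational points on plane curves of bounded height), so a type I set has $O(B^{1-\delta})$ points; a type II set in $\bb P^2$ has $O(B^{1-\delta})$ points by the large sieve (cf.\ \cite{Serre2016Topics-in-Galoi}), the power saving surviving the change of height. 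Hence any thin subset of $\bb P^2$ has $O(B^{1-\delta})$ points of anticanonical height at most $B$, for some $\delta>0$.

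For the inductive step I would fix a projection $(\bb P^2)^{m}\to(\bb P^2)^{m-1}$ forgetting one factor, so that the fibres are copies of $\bb P^2$ and the height factors as $H(P)=H(b)H(x)$ with $b$ the image point and $x$ the fibre coordinate. Writing a type II piece of $A$ as $\pi(Y(\bb Q))$ with $Y$ irreducible and $\deg\pi\ge 2$ (type I pieces being similar and easier), the key point is that there is a proper closed $W\subsetneq(\bb P^2)^{m-1}$ such that for every $b\notin W$ no component of $Y_b:=\pi^{-1}(\{b\}\times\bb P^2)$ maps birationally onto $\bb P^2$ — otherwise, spreading out over the base, such components would sweep out a rational section of $\pi$, contradicting $\deg\pi\ge 2$ and the irreducibility of $Y$. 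For such $b$ every component of $Y_b$ is a cover of $\bb P^2$ of degree $\ge 2$, so the slice $A_b\subseteq\bb P^2$ is a finite union of thin sets and, by the base case, $\#\{x\in A_b:H(x)\le B'\}\ll(B')^{1-\delta}$ \emph{uniformly} in $b$ (the $Y_b$ forming a bounded family). Summing the convolution over the base, using $\#\{b\in(\bb P^2)^{m-1}(\bb Q):H(b)\le T\}\asymp T(\log T)^{m-2}$ and partial summation,
\[
\sum_{b\notin W,\ H(b)\le B}\#\{x\in A_b:H(x)\le B/H(b)\}\ \ll\ B^{1-\delta}\!\!\sum_{H(b)\le B}\!H(b)^{-(1-\delta)}\ \asymp\ B(\log B)^{m-2}.
\]
The remaining $b\in W$ contribution is $\ll B\sum_{b\in W,\,H(b)\le B}H(b)^{-1}\ll B(\log B)^{m-2}$, using the inductive hypothesis for the type I thin set $W(\bb Q)\subseteq(\bb P^2)^{m-1}(\bb Q)$. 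This would close the induction with $\gamma=0$.

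The step I expect to be the main obstacle is the uniformity across the fibration just used: the convolution genuinely requires $\#\{x\in A_b:H(x)\le B'\}\ll(B')^{1-\delta}$ with $\delta>0$ and implied constant independent of $b$, which rests on the geometric fact that the $Y_b\to\bb P^2$ stay covers of degree $\ge 2$ away from a codimension-$\ge 1$ locus, together with the (routine but necessary) fact that this exceptional locus drops a power of $\log B$. Making this uniform is essentially a large-sieve bound with good dependence on parameters in a family; if one prefers to sidestep it and simply quote a black-box estimate for thin subsets of $(\bb P^2)^{m}$ in the anticanonical height, one typically obtains only $\gamma>0$ — which is why the lemma is stated in the above form.
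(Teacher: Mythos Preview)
Your inductive fibration strategy is a genuine alternative to the paper's direct large-sieve argument, but the key geometric claim in the inductive step is false as stated. You assert that for $\pi\colon Y\to(\bb P^2)^m$ irreducible of degree $\ge 2$ there is a proper \emph{closed} $W\subsetneq(\bb P^2)^{m-1}$ such that for $b\notin W$ no component of $Y_b$ maps birationally to $\bb P^2$, arguing that otherwise ``spreading out'' would give a rational section. This fails: take $Z\to(\bb P^2)^{m-1}$ an irreducible degree-$2$ cover and $Y=Z\times\bb P^2$ with $\pi=(\text{cover})\times\mathrm{id}$. Then for every $b$ outside the branch locus, $Y_b$ splits geometrically into two copies of $\bb P^2$, each mapping isomorphically; monodromy permutes them, so no section exists, yet your hypothesis on $W$ is violated everywhere. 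Over $\bb Q$ the fibre $A_b=\pi(Y_b(\bb Q))$ equals all of $\bb P^2(\bb Q)$ precisely when $Z_b(\bb Q)\neq\emptyset$, so the ``bad'' $b$'s form a \emph{thin} (type~II) set in the base, not a closed one, and over those $b$ there is no saving at all in the fibre.

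The fix is a case split via the Stein factorisation of $Y\to(\bb P^2)^{m-1}$: if this has degree $\ge 2$, the image of $Y(\bb Q)$ in $(\bb P^2)^{m-1}(\bb Q)$ is itself thin and you invoke the inductive hypothesis on the base (with the trivial bound $\ll B/H(b)$ in each fibre); if it has degree $1$, the generic fibre $Y_\eta\to\bb P^2_\eta$ is irreducible of degree $\ge 2$ and your argument goes through. By contrast, the paper avoids induction entirely: it lifts to the affine cone $(\bb Z_{\mathrm{prim}}^3)^{3e-1}$, breaks the product height into dyadic boxes, and applies the multidimensional large sieve in lopsided boxes directly, estimating $G(Q)$ via Serre's $\#\overline A_p\le cp^{3(3e-1)}$ bound, Chebotarev, and Wirsing. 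That route sidesteps the uniformity-in-$b$ issue you flag at the end, at the cost of producing only some $\gamma<1$ rather than $\gamma=0$.
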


\begin{proof}
Let $A \subset (\bb P^2)^{3{\d}-1}(\bb Q)$ be any non-empty thin set. We will prove that the number of points of height at most $B$ which lie in $A$ can be bounded above by $O\left(B(\log B)^{3e-3+\gamma}\right)$, from which (along with \Cref{prop:not_transitive}) the result follows. Note that it suffices to just consider the case when $A$ is a thin set of type I or of type II. 

We count points in $(\bb P^2)^{3e-1}(\bb Q)$ by passing to the affine cone $(\bb Z_{\text{prim}}^3)^{3e-1}$. The affine cone of $A$ is
\[
A' = \{ x \in (\bb Z_{\text{prim}}^3)^{3e-1} : (\mathbf{x}^{(1)}: \ldots: \mathbf{x}^{(3e-1)}) \in A\}
\]
 and we denote by $\overline{A}_p$ the reduction of $A'$ modulo a prime $p$.
We will upper bound the number of elements in $A'$ of bounded height (and thus those in $A$) by the cardinality
\[
S(B) := \#\left\{ x \in (\bb Z^3_{\text{prim}})^{3{\d}-1} : \prod_{j=1}^{3e-1} \vert \vert \mathbf{x}^{(j)}\vert\vert_{\infty} \leq B^{1/3} \text{ and } x \Mod p \in \overline{A}_p\right\}.
\]
To attack this we first break into dyadic intervals
\[
S(B) \leq 
\sum_{\substack{B_1\cdots B_{3e-1} \leq B^{1/3}\\ \text{dyadic}}} 
H(B_1, \dots, B_{3e-1}),
\]
where $H(B_1, \dots, B_{3e-1})$ is defined to be 
\[
\#\{ x \in (\bb Z_{\text{prim}}^3)^{3{\d}-1} : B_j/2<\vert \vert \mathbf{x}^{(j)} \vert \vert_{\infty} \leq B_j\, \forall j,\text{ and } x \Mod p \in \overline{A}_p\}.
\]
The inner cardinality can be estimated using the multi-dimensional large sieve in lopsided boxes (see e.g.\ \cite[theorem 4.1]{Kow}).
This gives
\[
S(B) \ll \sum_{\substack{B_1\cdots B_{3e-1} \leq B^{1/3}\\ \text{dyadic}}} 
  \left(\prod_{j=1}^{3e-1} \left(B_j + Q^2\right)^3\right) /G(Q), 
\]
where 
\begin{align*} G(Q) &:= \sum_{q \leq Q} \mu^2(q) \prod_{p \mid q} \frac{\omega_p}{1- \omega_p},
\\ \omega_p &:= 1 - \frac{\#\overline{A}_p}{\#(\bb F_p^3 \setminus\{\boldsymbol{0}\})^{3e-1}}.\end{align*}
If $A$ is a thin set of type II, then by \cite[thm. 3.6.2]{Serre2016Topics-in-Galoi}, there exists $c \in (0,1)$ and a finite Galois extension $k/ \bb Q$ such that for all sufficiently large primes $p$ which split completely in $k$, we have
 $$\#\overline{A}_p\leq c p^{3(3e-1)} + O(p^{3(3e-1) - 1/2}).$$ 
We denote the set of such sufficiently large, completely splitting primes by $\mathcal{P}$ and let $\delta$ be the natural density of such primes (which is strictly greater than 0 by Chebotarev's density theorem).
Therefore there exists $\eta < (1-c)/c$ such that for all primes $p \in \mathcal{P}$, we have $\frac{\omega_p}{1- \omega_p} \geq \eta$. Thus $$G(Q) \gg \sum_{\substack{q \leq Q\\ p \mid q \implies p \in \mathcal{P}}} \mu^2(q) \eta^{\omega(q)}.$$
This sum is estimated using Wirsing's theorem \cite[satz 1.1]{Wirsing}. The Chebotarev density theorem, along with an application of partial summation, tells us that
\[
\sum_{\substack{p \leq Q \\ p \in \mathcal{P}}} \frac{\eta \log p}{p} \sim \eta \delta \log Q.
\] 
Therefore by \cite[satz 1.1]{Wirsing} we have
\[
\sum_{\substack{q \leq Q\\ p \mid q \implies p \in \mathcal{P}}} \mu^2(q) \eta^{\omega(q)}
\sim
\frac{Q}{\log Q} \prod_{\substack{p \leq Q \\ p \in \mathcal{P}}}\left( 1 + \frac{\eta}{p} \right).
\]
By taking the logarithm of the above product, we have
\[
\log \left( \prod_{\substack{ p \leq Q \\ p \in \mathcal{P}}} \left(1 + \frac{ \eta }{p}\right) \right)
=
\sum_{\substack{ p \leq Q \\ p \in \mathcal{P}}} \log \left( 1+ \frac{\eta}{p} \right)
\gg
\delta \eta \log \log Q,
\] where the inequality is again a straightforward consequence of the Chebotarev density theorem.
Hence, we conclude that
\[
G(Q) \gg Q (\log Q)^{\delta \eta -1}.
\]
When $A$ is a thin set of type I, the Lang--Weil estimate (e.g.\ \cite[thm. 3.6.1]{Serre2016Topics-in-Galoi}) tells us that
\[
\#\overline{A}_p \ll p^{3(3e-1) -1},
\]
from which the bound $G(Q) \gg_{\epsilon} Q^{2-\epsilon}$ can be deduced in a similar manner.
Let $\gamma = 1-\delta \eta$, then setting $Q = \min \limits_j\{B_j^{1/2}\}$ we get
\begin{align*}
S(B) &\ll (\log B)^{\gamma}
\sum_{\substack{B_1\cdots B_{3e-1} \leq B^{1/3}\\ \text{dyadic}}} 
(B_1 \cdots B_{3e-1})^3 \min \limits_j\{B_j^{-1/2}\}\\
&\ll (\log B)^{\gamma} \sum_{\substack{B_1 \leq B_2 \leq \dots \leq B_{3e-1}\\ B_1 \cdots B_{3e-1} \leq B^{1/3}\\ \text{dyadic}}}
(B_1 \cdots B_{3e-1})^3 B_1^{-1/2}\\
&\ll B(\log B)^{\gamma}\sum_{\substack{B_1 \leq B_2 \leq \dots \leq B_{3e-2}\\ B_1 \cdots B_{3e-2} \leq B^{1/3}\\ \text{dyadic}}}
 B_1^{-1/2}\\
&\ll B(\log B)^{3e-3 + \gamma} \sum_{\substack{B_1 \leq B^{1/3}\\ B_1 = 2^j \text{ for some } j}}B_1^{-1/2}
.
\end{align*}
This final sum is convergent and so we deduce the claimed bound.
\end{proof}
In other words, the ratio $N(B)/M(B)$ tends to zero at least as fast as $(\log B)^{\gamma -1}$ as $B \to \infty$.

\section{Splitting fields for ${\d}=3$}\label{sec:d_3}
We know that $N_3 = 12$, and by \ref{conj:S_N_d} we should expect that the Galois group is $S_{12}$ for all $P$ outside some thin set; here we verify that. By the same argument as in the proof of \ref{prop:not_transitive} it is enough to verify that the Galois group of the splitting field of the generic fibre $L_\eta = ev^{-1}\eta$ is $S_{12}$. If $P$ is any point in $(\bb P^2)^{8}$ with $\# C_P = 12 $ (so $ev$ is \'etale in a neighbourhood of $P$) then we have a natural injection from the Galois group of $L_P$ to the Galois group of $L_\eta$. Hence it suffices to find a single such $P$ for which we can show that the Galois group of $L_P$ is $S_{12}$. 

For readability we will describe our example in affine coordinates, on one of the standard charts of $\bb P^2$. A little random experimentation brought us to the 8 points $(1,0), (-3,1), (3,-3), (-3,3), (2,-1), (-3,1), (2,4), (1,3)$ (in the first affine patch). The 2-dimensional space of cubics through these eight points is spanned by the cubics
\begin{equation*}
x^3 + 5/74y^3 + 28/37x^2 -10/37xy -50/37y^2 -173/37x + 275/74y + 108/37
\end{equation*}
and 
\begin{equation*}
x^2y -75/74y^3 + 61/37x^2 + 39/37xy + 10/37y^2 + 227/37x + 463/74y - 288/37. 
\end{equation*}
We used SAGE to re-write the generic element of the linear span of these two cubics into Weierstrass form, and to compute the discriminant of the resulting cubic, given by 
\begin{equation*}
\begin{split}
\Delta = -\frac{24953575063474272765882500}{6582952005840035281}t^{12} & + \frac{134142884940172812137734125}{6582952005840035281}t^{11} \\
+ \frac{1421147382955123926407537625}{26331808023360141124}t^{10} & - \frac{7215115079178977822483091875}{26331808023360141124}t^9 \\
+ \frac{17943310292096345174858538375}{105327232093440564496}t^8 & +\! \frac{16439749600766827372335403125}{52663616046720282248}t^7 \\
- \frac{3707387509942909790535756625}{13165904011680070562}t^6 & - \frac{1591463047251983769286633125}{26331808023360141124}t^5 \\ - \frac{400455774447542072616372750}{6582952005840035281}t^4 &- \frac{1770912522449250247984242375}{52663616046720282248}t^3 \\ - \frac{41930948369778731917638750}{6582952005840035281}t^2 & - \frac{12906646230435281098756875}{26331808023360141124}t \\- \frac{866622835858183959391875}{105327232093440564496}. 
\end{split}
\end{equation*}
A curve of degree 3 is rational if and only if it is not smooth, and the vanishing of the discriminant detects exactly when this non-smoothness occurs. In other words, the scheme of zeros of $\Delta$ is isomorphic to the scheme $C_P = ev^{-1}P$, so the splitting field $L_P$ is given by the splitting field of $\Delta$. We computed the Galois group of the latter in {\tt MAGMA}, and found it to be $S_{12}$ as required. 

\begin{remark}
On can alternatively argue using Del Pezzo surfaces (c.f. \cite[\S 7.3]{Varilly-Alvarado-Zywina}), though in the end this simply re-phrases the computer search. The blow up of $\bb P^2$ in 8 points in general position is a del Pezzo surface $X$ of degree 1, and the strict transforms of the 12 singular cubics give the 12 singular elements of the anticanonical linear system on $X$. Blowing up the 9th point through which they all pass yields an elliptic surface, and (the strict transforms of) these 12 curves are the singular fibres of the elliptic fibration. Explicitly, the surface $X$ has the equation:
$$y^2 = x^3 + f_2(z,w)x^2 + f_4(z,w)x + f_6(z,w)$$
where $f_i$ has degree $i$. The 12 points over which the fibres are non-smooth are the zero locus of the discriminant of this elliptic pencil, which is a polynomial $D(z,w)$ of degree 12 in $z$, $w$. One can then perform a computer search for polynomials $f_i$ for which $D(z,w)$ has Galois group $S_{12}$. 
\end{remark}





\section{Hypersurfaces of low degree}
\subsection{Formalities}
We continue to work over a Hilbertian field $k$ with algebraic closure $\bar k$. The reader will note that the only properties of the variety $\bb P^2$ used in the proof of \ref{prop:not_transitive} are the following:
\begin{enumerate}
\item
$\ca M_{0,3{\d}-1}(\bb P^2;{\d})$ is an irreducible variety;
\item the product of the evaluation maps $ev\colon \ca M_{0,3{\d}-1}(\bb P^2;{\d}) \to (\bb P^2)^{3{\d}-1}$ is generically finite. 
\end{enumerate}

This naturally leads us to a generalisation of \ref{prop:not_transitive}. Let $X/k$ be an irreducible variety, fix a line bundle\footnote{More generally fix a class in \'etale cohomology of suitable dimension.} $\ca L$ on $X$, and for each non-negative integer $n$ define a moduli functor $\ca M_{0,n}(X, \ca L;{\d})$ on the category $Sch_k$ of schemes over $k$, sending $T$ to the set of tuples $(C/T, x_1, \dots, x_n, \phi)$ where
\begin{itemize}
\item
$C/T$ is a smooth proper curve of genus 0;
\item
The $x_i \in C(T)$ are disjoint sections;
\item
$\phi\colon C \to X_T$ is a $T$-morphism which is generically immersive on each fibre, such that $\phi^*\ca L$ has degree ${\d}$ on each fibre of $C/T$, 
\end{itemize}
modulo isomorphisms over $X$ (such isomorphisms are unique when they exist, so this is a fine moduli space). This functor is not always an irreducible variety (for example, if $\ca L = \ca O_X$ and ${\d}=0$ it is likely to have infinitely many connected components; the quintic 3-fold provides a much less trivial example), but sometimes it is an irreducible variety, for example when $X$ is $\bb P^2$ and $\ca L = \ca O(1)$.


As before we have evaluation maps $ev_i\colon \ca M_{0,n}(X, \ca L;{\d}) \to X$ sending a tuple $(C/T, x_1, \dots, x_n, \phi)$ to $\phi \circ x_i$, and can take their product $ev\colon \ca M_{0,n}(X, \ca L;{\d}) \to X^n$. The set of $\bar k$-points of the fibre of $ev$ over a point $P = (p_1, \dots, p_n) \in X^n(k)$ is exactly the set of rational curves in $X$ over $\bar k$, of $\ca L$-degree ${\d}$, and passing through all the $p_i$. In general this set can be infinite, but if $ev$ is generically finite (i.e. the fibre of $ev$ over the generic point of $X^n$ is finite) then, for $P$ outside some proper Zariski closed subset of $X^n$, these sets are finite and all of the same cardinality, which we shall denote $N(X,\ca L, n, {\d})$. In this case we say $P$ is \emph{transitive} if indeed the fibre has the `expected' number $N(X,\ca L, n, {\d})$ of $\bar k$-points, and moreover the natural action of $\on{Gal}(\bar k/k)$ on the fibre is transitive.  Imitating the proof of \ref{prop:not_transitive} one immediately obtains
\begin{proposition}\label{prop:general_non_transitive}
Fix a variety $X$, a line bundle $\ca L$ on $X$, and non-negative integers ${\d}$ and $n$. Assume that 
\begin{enumerate}
\item
$\ca M_{0,n}(X, \ca L;{\d})$ is an irreducible variety;
\item the product of the evaluation maps $ev\colon \ca M_{0,n}(X, \ca L;{\d}) \to X^n$ is generically finite. 
\end{enumerate}
Then there exists a thin subset $A \sub X^n(k)$ such that all $P \in X^n(k) \setminus A$ are transitive. 
\end{proposition}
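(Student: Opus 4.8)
The plan is to repeat the proof of \ref{prop:not_transitive} essentially verbatim. Indeed, the only properties of $\bb P^2$ used there were that $\ca M_{0,3{\d}-1}(\bb P^2;{\d})$ is an irreducible variety and that the product of evaluation maps is generically finite, and these are precisely hypotheses (1) and (2) in the present statement. So almost nothing new is required; the point is simply to verify that Serre's lemma applies in this generality.

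First I would recall that $\ca M_{0,n}(X,\ca L;{\d})$ is representable: as observed when the functor was introduced, the isomorphisms over $X$ involved are unique when they exist, so it is a fine moduli space, represented by a scheme which by hypothesis (1) is an irreducible variety $M/k$. It carries the evaluation morphisms $ev_i\colon M \to X$ and their product $ev\colon M \to X^n$, and by construction $ev^{-1}(P)(\bar k)$ is exactly the set of rational curves in $X_{\bar k}$ of $\ca L$-degree ${\d}$ through the points of $P$. Next, let $\eta$ be the generic point of $X^n$ and $K = k(X^n)$ its residue field. By hypothesis (2) the fibre $M_\eta = ev^{-1}(\eta)$ is a finite $K$-scheme; since $M$ is a variety it is reduced and irreducible, hence so is $M_\eta$, which is therefore $\on{Spec} L_\eta$ for a finite field extension $L_\eta/K$ of degree $N(X,\ca L,n,{\d})$. (We assume implicitly that this generic fibre is non-empty, otherwise the statement is vacuous.) This puts us exactly in the setting of \cite[prop. 3.3.5]{Serre2016Topics-in-Galoi}: a dominant, generically finite morphism between irreducible $k$-varieties with irreducible generic fibre, $k$ Hilbertian. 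Applying that proposition yields a thin subset $A_0 \sub X^n(k)$ such that for all $P \in X^n(k) \setminus A_0$ the fibre $ev^{-1}(P)$ has exactly $N(X,\ca L,n,{\d})$ points over $\bar k$ and $\on{Gal}(\bar k/k)$ permutes them transitively, i.e. $P$ is transitive. Finally I would enlarge $A_0$ by the thin (type I) set of $k$-points of the proper closed locus in $X^n$ over which $ev$ fails to be finite, and take $A$ to be the resulting thin set. As in the remark after \ref{prop:not_transitive}, the same argument shows one may arrange in addition that for $P \notin A$ the Galois group of the splitting field of the fibre is isomorphic to $\on{Gal}(L_\eta/K)$.

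I do not expect a genuine obstacle here — which is why the text can say ``immediately''. The only points needing a moment's care are that the generic fibre is irreducible \emph{as a scheme} (which uses reducedness of $M$, so that $M_\eta$ is a field rather than merely a local Artinian $K$-algebra), and that hypothesis (2) as phrased — finiteness of the generic fibre — combined with irreducibility of $M$ indeed forces $ev$ to be dominant and generically finite in the sense required by Serre's proposition. Neither is substantive; all the real content is packaged into the hypotheses.
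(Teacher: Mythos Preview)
Your proposal is correct and follows exactly the approach the paper intends: the paper simply says ``Imitating the proof of \ref{prop:not_transitive} one immediately obtains'' this proposition, and your write-up is precisely that imitation, invoking \cite[prop.~3.3.5]{Serre2016Topics-in-Galoi} once the two hypotheses replace the specific facts about $\bb P^2$. The extra care you take (reducedness of the generic fibre, enlarging by the type~I locus, the vacuous empty-fibre case) matches the paper's surrounding discussion and adds nothing substantively new.
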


Note that we do not exclude the possibility that the generic fibre of $ev$ is empty, but any map from the empty scheme is finite, and for these purposes we consider the unique group action on the empty set to be transitive, making the result vacuous in this case. 

Connoisseurs of the empty set will consider this in poor taste --- `transitive' should morally mean `has exactly one orbit', so that the action on the empty set should not be considered transitive. Such readers should add to our result the assumption that the generic fibre of the map $ev$ is non-empty; understanding when this happens is a very interesting problem. 

Suppose that $ \ca M_{0,n}(X, \ca L;{\d})$ is an irreducible variety (in particular reduced); if the tangent map to $ev$ is surjective at some point of the source, it follows that  $ev$ is dominant (and the converse holds in characteristic zero, by generic smoothness). Surjectivity of the tangent map can be analysed via deformation theory; details can be found in \cite[\S 4]{DebarreHDAG}. For example, one can show (combining results of \cite{Shen_normal_bundle} and \cite{CoskunRiedl}) that, for a general cubic 3-fold the product of evaluation maps $ev\colon \ca M_{0,n}(X, \ca L;{\d}) \to X^n$ is dominant whenever $n \le e-1$ and $e \ge 2$. Note however that in this setting the dimensions are not equal (see below for a more detailed analysis), so that we do not get a generically finite map.

\subsection{Hypersurfaces}

For an interesting application of \ref{prop:general_non_transitive} we need an irreducible variety $X$ with two properties. First, it should satisfy the criteria of \ref{prop:general_non_transitive}. But we should also ask that $X(k)$ is itself not thin, otherwise the conclusion is vacuous. Such examples are provided by hypersurfaces in projective space of low degree (relative to their dimension).

Fix positive integers $N$, ${\f}$ and ${\d}$, and let $X$ be a smooth hypersurface of degree ${\f}$ in $\bb P^N$; we fix $\ca L = \ca O(1)$, and drop it from the notation henceforth. When is $\ca M_{0,n}(X;{\d})$ an irreducible variety? Note that $\ca M_{0,n}(X;{\d})$ can be built from $\ca M_{0,0}(X;{\d})$ by repeatedly taking universal curves (and deleting loci where sections intersect), so $\ca M_{0,n}(X;{\d})$ is an irreducible variety if and only if $\ca M_{0,0}(X;{\d})$ is, and their dimensions differ by $n$. There are two main cases when $\ca M_{0,0}(X;{\d})$ is known to be an irreducible variety:
\begin{enumerate}
\item
$\on{char} k = 0$, $N>2$ and $\f+2<N$, and $X$ is generic, by work of Reidl and Yang \cite{RiedlYang};
\item 
$N>2$ and $(2{\f}-1)2^{{\f}-1}<N$ by work of Browning and Sawin \cite{Browning2018Free-rational-c}, with no genericity assumptions on $X$, but assuming $\on{char}k = 0$ or $\on{char} k >\f$. 
\end{enumerate}
In these cases it is also known that $\ca M_{0,0}(X;{\d})$ is of the `expected' dimension $(N + 1 - {\f}){\d} + (N - 4)$. Since the dimension of $X^n$ is $n(N-1)$, the map $ev\colon \ca M_{0,n}(X, \ca L;{\d}) \to X^n$ is generically finite if and only if we have the equality ${(N + 1 - {\f}){\d} + (N - 4) + n = n(N-1)}$ (it could perhaps happen that the map $ev$ is not dominant, but in this case the generic fibre of $ev$ is empty, so the result is vacuously true as remarked above). We deduce:
\begin{proposition}\label{prop_hypersurfaces_thin}
Let $N>2$, ${\f}>0$ and ${\d}>0$ and $n \ge 0$ be integers such that $(N + 1 - {\f})\d + (N - 4) + n = n(N-1)$. Let $X$ be a hypersurface in $\bb P^N$ of degree ${\f}$, satisfying one of the assumptions (1) and (2) above. 
Then there exists a thin set $A \sub X^n(k)$ such that all $P \in X^n(k) \setminus A$ are transitive. 
\end{proposition}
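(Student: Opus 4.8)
The plan is to deduce the statement directly from \ref{prop:general_non_transitive}, applied to our $X$ with $\ca L = \ca O(1)$ and the given $n$ and $\d$; so the whole task reduces to verifying the two hypotheses of that proposition, namely that $\ca M_{0,n}(X;\d)$ is an irreducible variety and that the product of evaluation maps $ev\colon \ca M_{0,n}(X;\d)\to X^n$ is generically finite.

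For irreducibility I would invoke the cited results: under assumption (1) this is Riedl--Yang \cite{RiedlYang}, and under assumption (2) it is Browning--Sawin \cite{Browning2018Free-rational-c}, both of which moreover assert that $\ca M_{0,0}(X;\d)$ has the expected dimension $(N+1-\f)\d+(N-4)$. To pass from $n=0$ to general $n$ I would use the observation made just before the statement: $\ca M_{0,n}(X;\d)$ is built from $\ca M_{0,0}(X;\d)$ by repeatedly passing to the universal curve and deleting the closed locus where the newly added section meets one of the previous ones. Since the universal curve over an irreducible base is irreducible (its geometric fibres are irreducible of dimension $1$) and deleting a proper closed subscheme preserves irreducibility, an induction on $n$ shows $\ca M_{0,n}(X;\d)$ is an irreducible variety of dimension $(N+1-\f)\d+(N-4)+n$. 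This verifies hypothesis (1).

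For generic finiteness, note that $\dim X^n = n(N-1)$, while by the previous paragraph $\ca M_{0,n}(X;\d)$ is irreducible of dimension $(N+1-\f)\d+(N-4)+n$; the hypothesis on $N$, $\f$, $\d$, $n$ is precisely that these two dimensions coincide. If $ev$ is dominant, then a dominant morphism between irreducible varieties of the same dimension is generically finite, so hypothesis (2) holds and \ref{prop:general_non_transitive} applies directly, producing the thin set $A \sub X^n(k)$ outside which every point is transitive. If $ev$ is not dominant, then its fibre over the generic point of $X^n$ is empty, and --- as remarked after \ref{prop:general_non_transitive} --- the conclusion holds vacuously (one may take $A = \varnothing$). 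Either way we obtain the desired thin set.

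I do not anticipate a real obstacle: all the substance sits in the irreducibility and dimension theorems of \cite{RiedlYang} and \cite{Browning2018Free-rational-c}, and the remaining arguments are routine --- propagating irreducibility along universal curves, a dimension comparison, and an appeal to \ref{prop:general_non_transitive}. The only subtlety worth flagging is the non-dominant case, which must be dispatched using the vacuous-truth convention of \ref{prop:general_non_transitive} rather than an actual generic-finiteness assertion; understanding when $ev$ is in fact dominant (equivalently, when the generic fibre is non-empty) is, as the authors note elsewhere, a separate and interesting question that we do not address here.
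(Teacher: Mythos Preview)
Your proposal is correct and follows exactly the approach of the paper: the proposition is stated immediately after the discussion that establishes irreducibility of $\ca M_{0,n}(X;\d)$ via \cite{RiedlYang} or \cite{Browning2018Free-rational-c} and the universal-curve construction, together with the dimension comparison (and the vacuous-truth remark for the non-dominant case), and the paper simply writes ``We deduce:'' before the statement. You have reproduced precisely this reasoning.
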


\subsection{Asymptotics}
We keep the notation of the above section, but restrict now to the case $k = \bb Q$. A-priori it could be that $A = X^n(\bb Q)$, but using the results from the appendix we can show this is far from the case. First, we must assume that $X$ has at least one rational point (by Birch's result below, this is equivalent to assuming that $X$ is everywhere locally soluble). 

Then for a positive integer $B$, write $\bd M(B)$ for the number of rational points on $X^n$ of height less than $B$. Assuming that $N > 2^\f({\f}-1) -1$, Birch \cite[theorem 1]{Birch}, combined with the compatibility of Manin's conjecture with products as noted above, shows that there exist $c>0$ and $\delta >0 $ such that 
\begin{equation*}\label{Eq:birch}
\bd M(B) = c B (\log B)^{n-1}+ O\left( B(\log B)^{n-1 - \delta}\right). 
\end{equation*}

Now write $\bd N(B)$ for the number of $P \in X^n(\bb Q)$ which are not transitive. Assume $N>2^{\f}(2{\f}-1) $, then by \ref{prop_hypersurfaces_thin} of the preceding section, there exists a thin subset $A \sub X^n$ containing all those $P$ which are not transitive. Now we can apply \ref{thin theorem} to see that there exists $\gamma \in [0,1)$ with
\begin{equation*}
\bd N(B)  \ll B (\log B)^{n-2+\gamma}
\end{equation*}

Hence the ratio
\begin{equation*}
\frac{\bd N(B)}{\bd M(B)}  \ll \frac{ B (\log B)^{n-2+\gamma}}{  c B (\log B)^{n-1}+  c'B(\log B)^{n-1 - \delta}} \ll  (\log B)^{\gamma-1}
\end{equation*}
where $c'$ is some positive constant. Summarising, we have
\begin{theorem}
Let $X/\bb Q$ be a hypersurface with $N>2^{\f}({\f}-1)$. Then there exists $\delta' >0$ such that 
\begin{equation*}
\frac{\#\{P \in X^n(\bb Q): H(P) \le B \text{ and } P \text{ not transitive}\}}{\#\{P \in X^n(\bb Q): H(P) \le B\}} \ll (\log B)^{-\delta'}. 
\end{equation*}
\end{theorem}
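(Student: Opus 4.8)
The plan is to assemble the theorem from the pieces that have already been established earlier in the excerpt, so the proof is essentially a matter of combining a counting estimate for thin sets with the Manin-type asymptotic for the full point count. First I would invoke \ref{prop_hypersurfaces_thin}: under the hypothesis $N > 2^{\f}(2{\f}-1)$, which forces $(2{\f}-1)2^{{\f}-1}<N$ (so assumption (2) of \ref{prop_hypersurfaces_thin} holds) and also the dimension-matching identity $(N + 1 - {\f})\d + (N - 4) + n = n(N-1)$ which we take as part of the standing hypotheses, there is a thin set $A \subset X^n(\bb Q)$ containing every non-transitive $P$. Hence $\bd N(B) \le \#\{P \in A : H(P) \le B\}$, and it suffices to bound the number of bounded-height points in an arbitrary thin set.

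Next I would apply \ref{thin theorem} (the quantitative estimate proved in the appendix / \ref{sec:thin points}) to the thin set $A$ on the variety $X^n$, exactly as was done in the displayed computation immediately preceding the theorem statement: this yields a constant $\gamma \in [0,1)$ with $\bd N(B) \ll B(\log B)^{n-2+\gamma}$. In parallel, I would record the denominator estimate: since $N > 2^{\f}({\f}-1) - 1$, Birch's theorem \cite[theorem 1]{Birch} together with the compatibility of Manin's conjecture with products gives $\bd M(B) = cB(\log B)^{n-1} + O(B(\log B)^{n-1-\delta})$ for some $c, \delta > 0$; in particular $\bd M(B) \gg B(\log B)^{n-1}$. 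Dividing, one gets $\bd N(B)/\bd M(B) \ll (\log B)^{(n-2+\gamma)-(n-1)} = (\log B)^{\gamma - 1}$, and since $\gamma < 1$ we may set $\delta' = 1 - \gamma > 0$ to conclude.

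I should be slightly careful about the hypotheses: the theorem as stated only assumes $N > 2^{\f}({\f}-1)$, and I should check this is compatible with (indeed implies, in the relevant range) the stronger $N > 2^{\f}(2{\f}-1)$ needed to apply \ref{prop_hypersurfaces_thin} — if not, the theorem statement should be read with the stronger bound, and I would simply note that the hypothesis $N > 2^{\f}(2{\f}-1)$ is what is actually used, which also implies Birch's condition $N > 2^{\f}({\f}-1) - 1$, so the denominator asymptotic is available. Everything else is bookkeeping: the identity $(N+1-{\f})\d + (N-4) + n = n(N-1)$ is carried along as a standing assumption so that $ev$ is generically finite, and I would state it explicitly at the start of the proof.

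The only genuinely non-routine ingredient is \ref{thin theorem}, the power-saving count for points of bounded height in a thin set on a product of hypersurfaces — but since that is proved in a later section and we are allowed to assume it, the proof of this theorem itself is a short assembly. Concretely, the write-up would be: ``\emph{Proof.} By hypothesis the dimension identity holds and $N > 2^{\f}(2{\f}-1)$, so \ref{prop_hypersurfaces_thin} furnishes a thin set $A \subset X^n(\bb Q)$ with every non-transitive point in $A$; thus $\bd N(B) \le \#\{P \in A: H(P)\le B\}$. Applying \ref{thin theorem} to $A$ gives $\gamma \in [0,1)$ with $\bd N(B) \ll B(\log B)^{n-2+\gamma}$. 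Since $N > 2^{\f}({\f}-1)$, Birch's theorem and the product-compatibility of Manin's conjecture give $\bd M(B) = cB(\log B)^{n-1} + O(B(\log B)^{n-1-\delta})$ with $c,\delta>0$, whence $\bd N(B)/\bd M(B) \ll (\log B)^{\gamma-1}$. Taking $\delta' = 1-\gamma$ completes the proof. $\square$'' I do not expect any serious obstacle here; the real content lives in \ref{thin theorem} and in \ref{prop_hypersurfaces_thin}, both available to us.
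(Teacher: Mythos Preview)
Your proposal is correct and follows essentially the same approach as the paper: both combine \ref{prop_hypersurfaces_thin} (to trap the non-transitive points in a thin set), \ref{thin theorem} (to bound the numerator by $B(\log B)^{n-2+\gamma}$), and Birch plus product-compatibility of Manin's conjecture (for the denominator asymptotic), then divide and set $\delta' = 1-\gamma$. You also correctly flagged the mild tension between the stated hypothesis $N > 2^{\f}({\f}-1)$ and the stronger bound actually used to invoke \ref{prop_hypersurfaces_thin}; the paper itself simply works under the stronger assumption in the text preceding the theorem.
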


 Informally, this tells us that most collections of points on a hypersurface $X$ are transitive, thus the sets of curves through them contain no curves defined over $\bb Q$.

\section{Thin Sets on Smooth Hypersurfaces of Low Degree}
\label{sec:thin points}


\newcommand{\PP}{\mathbb{P}}
\renewcommand{\AA}{\mathbb{A}}
\newcommand{\A}{\mathbf{A}}
\newcommand{\FF}{\mathbb{F}}
\newcommand{\ZZ}{\mathbb{Z}}
\newcommand{\GG}{\mathbb{G}}
\newcommand{\NN}{\mathbb{N}}
\newcommand{\QQ}{\mathbb{Q}}
\newcommand{\RR}{\mathbb{R}}
\newcommand{\CC}{\mathbb{C}}
\newcommand{\Zprim}{\ZZ_{\text{prim}}}

\newcommand{\h}{\mathbf{h}}
\renewcommand{\d}{\mathbf{d}}
\newcommand{\s}{\mathbf{s}}
\newcommand{\m}{\mathbf{m}}
\newcommand{\n}{\mathbf{n}}
\newcommand{\M}{\mathbf{M}}
\newcommand{\x}{\mathbf{x}}
\newcommand{\y}{\mathbf{y}}
\renewcommand{\c}{\mathbf{c}}
\renewcommand{\v}{\mathbf{v}}
\renewcommand{\u}{\mathbf{u}}
\newcommand{\z}{\mathbf{z}}
\renewcommand{\b}{\mathbf{b}}
\renewcommand{\a}{\mathbf{a}}
\renewcommand{\k}{\mathbf{k}}
\newcommand{\w}{\mathbf{w}}

\renewcommand{\l}{\left}
\renewcommand{\r}{\right}

\newcommand{\norm}[1]{\left \vert \left \vert {#1} \right \vert \right \vert}

Let $F \in \ZZ[x_0, \dots, x_N]$ be a homogeneous form of degree $d$. Denote by $X$ the projective variety defined by $F$ and assume (for simplicity) that it is smooth. Given $n \in \NN$, suppose $A \subset X^n(\QQ)$ is a non-empty thin set. The purpose of this section is to show that a thin set on $X^n$ contains few points, extending a result of Browning--Loughran \cite[Theorem 1.8]{BL} on the number of points in a thin subset of a quadric to multiple copies of a general hypersurface (of suitably large dimension). 

\begin{theorem}\label{thin theorem}
If $N> 2^d (d-1) $ and $H$ is the anticanonical height function on $X^n$ described below then $\exists \,\delta, \gamma>0$ such that
\[
\# \{ x \in A : H(x) \leq B \} \ll_{A, X}
\left\{
\begin{array}{ll}
B^{1-\delta}(\log B)^{\gamma} &\text{ if } n=1,\\
B(\log B)^{n-2 + \gamma}&\text{ if } n\geq2. 
\end{array}\right.
\]
\end{theorem}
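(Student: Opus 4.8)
The plan is to reduce to the two fundamental cases of thin sets---type I (contained in a proper closed subvariety) and type II (the image of a degree-$\geq 2$ generically finite cover)---just as in the proof of \ref{P2 count}, and then to count points of bounded anticanonical height lying in the mod-$p$ reductions of the affine cone of $A$ using a large sieve. First I would pass to the affine cone: a rational point of $X^n$ of bounded height corresponds to a tuple $(\x^{(1)},\dots,\x^{(n)}) \in (\Zprim^{N+1})^n$ lying on the cone over $X$ in each coordinate, with $\prod_j \norm{\x^{(j)}}_\infty$ bounded by a power of $B$ (the power determined by the anticanonical bundle $\ca O(N+1-d)$ on $X$, so the height of a point is $\prod_j \norm{\x^{(j)}}_\infty^{N+1-d}$). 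Writing $\overline{A}_p$ for the reduction of the cone $A'$ modulo $p$, I would bound $\#\{x \in A : H(x) \leq B\}$ by $\#\{x \in (\Zprim^{N+1})^n : \prod_j \norm{\x^{(j)}}_\infty \leq B^{1/(N+1-d)},\ x \bmod p \in \overline{A}_p\}$, break the box into dyadic ranges $B_j/2 < \norm{\x^{(j)}}_\infty \leq B_j$, and apply the multidimensional large sieve in lopsided boxes to each dyadic piece, exactly as cited via \cite[theorem 4.1]{Kow}. This produces a bound of shape $\sum_{\text{dyadic}} \big(\prod_j (B_j + Q^2)^{N+1}\big)/G(Q)$ with $G(Q) = \sum_{q \leq Q}\mu^2(q)\prod_{p\mid q}\frac{\omega_p}{1-\omega_p}$ and $\omega_p = 1 - \#\overline{A}_p/\#(\FF_p^{N+1}\setminus\{\bd 0\})^n$; here the hypothesis $N > 2^d(d-1)$ is what guarantees (via Birch, or the circle-method estimates underpinning it) that $\#(X\bmod p) \asymp p^{N-1}$ and more importantly that the relevant exponential-sum savings hold so the sieve is non-trivial.

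Next I would estimate the sieve weight $G(Q)$ from below. For a type II thin set, \cite[thm. 3.6.2]{Serre2016Topics-in-Galoi} gives a constant $c \in (0,1)$ and a finite Galois extension $k/\QQ$ so that for all sufficiently large primes $p$ splitting completely in $k$ one has $\#\overline{A}_p \leq c\,p^{n(N+1)} + O(p^{n(N+1)-1/2})$; restricting $q$ to products of such primes $\mathcal P$ (of density $\delta > 0$ by Chebotarev) gives $\frac{\omega_p}{1-\omega_p} \geq \eta$ for some $\eta < (1-c)/c$, hence $G(Q) \gg \sum_{q\leq Q,\, p\mid q \Rightarrow p \in \mathcal P}\mu^2(q)\eta^{\omega(q)}$. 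Wirsing's theorem \cite[satz 1.1]{Wirsing}, fed the input $\sum_{p \leq Q,\, p\in\mathcal P}\frac{\eta\log p}{p} \sim \eta\delta\log Q$ from Chebotarev and partial summation, yields $G(Q) \gg Q(\log Q)^{\delta\eta - 1}$. For a type I thin set the Lang--Weil bound $\#\overline{A}_p \ll p^{n(N+1)-1}$ forces $\omega_p \gg 1/p$ to be replaced by $\omega_p \gg 1$, giving the much stronger $G(Q) \gg_\epsilon Q^{2-\epsilon}$. Setting $\gamma = 1 - \delta\eta \in [0,1)$ (type II being the bottleneck) and choosing $Q = \min_j B_j^{1/2}$, I substitute back: the dyadic sum becomes $\ll (\log B)^\gamma \sum_{\text{dyadic}} (B_1\cdots B_n)^{N+1}\min_j B_j^{-1/2}$, and ordering $B_1 \leq \cdots \leq B_n$ so the minimum is $B_1^{-1/2}$, summing over $B_n$ first costs one factor of $\log B$, and so on down to $B_2$, leaving $n-1$ logarithmic factors and a final convergent geometric sum $\sum_{B_1 = 2^j} B_1^{-1/2}$. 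This gives $\ll B(\log B)^{n-1+\gamma}$ --- wait, I need to be careful: the anticanonical normalisation means $\prod B_j^{N+1}$ saturates at $B \cdot (\text{something})$, and the count of dyadic tuples with $\prod B_j \leq B^{1/(N+1-d)}$ contributes $(\log B)^{n-1}$, but the $B_1^{-1/2}$ and the choice of $Q$ shave a full log and add the $\gamma$, netting $B(\log B)^{n-2+\gamma}$ for $n \geq 2$ and $B^{1-\delta}(\log B)^\gamma$ for $n=1$ (where there is no product to spread the height over and the $Q^2$ error terms must be absorbed by a genuine power saving, forcing the $B^{1-\delta}$).

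The main obstacle I anticipate is the $n = 1$ case and, relatedly, controlling the error terms $(B_j + Q^2)^{N+1}$ from the lopsided large sieve. When $n = 1$ there is a single box of size $B^{1/(N+1-d)}$ in each of $N+1$ variables and no product of logarithms to exploit; one must choose $Q$ a genuine positive power of $B$ (roughly $B^{1/2(N+1-d)}$ up to the constraint that $Q^2$ not dominate $B_j$), and then verify that $G(Q) \gg Q(\log Q)^{\delta\eta-1}$ together with the main term $B_1^{N+1}/G(Q)$ actually beats the trivial count $B = B_1^{N+1-d}$ by a power of $B$ --- this is where the hypothesis $N > 2^d(d-1)$ must be used quantitatively (not merely for irreducibility of fibres), and it is the place where the argument is genuinely an "effective strong approximation" statement rather than a formal consequence of Serre's machinery. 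I would also need to check that the implied constants genuinely depend only on $A$ and $X$ (uniformity in the dyadic decomposition), and that the passage from $X^n$ to the product of cones is compatible with the anticanonical height on the product being the product of anticanonical heights, which is routine but must be stated. Everything else --- the reduction to types I and II, the Chebotarev/Wirsing input, the dyadic bookkeeping --- follows the template of \ref{P2 count} essentially verbatim with $3$ replaced by $N+1$ and $\bb P^2$ replaced by $X$.
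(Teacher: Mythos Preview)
Your approach has a genuine gap: the large sieve from \cite{Kow} sieves over the full lattice $\ZZ^{n(N+1)}$, not over integer points of the affine cone $\widehat{X^n}$. The bound it produces has numerator $\prod_j(B_j+Q^2)^{N+1}\asymp\prod_j B_j^{N+1}$, whereas the count of points on $X^n$ in the dyadic box is only $\asymp\prod_j B_j^{N+1-d}$. You might hope to recover the missing factor $\prod_j B_j^d$ through $G(Q)$, since $\overline{A}_p$ sits inside the codimension-$n$ cone $\widehat{X^n}\bmod p$, forcing $\omega_p\geq 1-O(p^{-n})$ and hence $G(Q)\gg Q^{n+1-\epsilon}$. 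But with the constraint $Q\leq\min_j B_j^{1/2}$ this yields at best $\prod_j B_j^{N+1}\cdot(\min_j B_j)^{-(n+1)/2+\epsilon}$, which still exceeds $\prod_j B_j^{N+1-d}$ as soon as $d\geq 2$ (already in the balanced case $B_1=\cdots=B_n$ one would need $n+1\geq 2nd$). No choice of $Q$ repairs this, and in fact your type~I/type~II dichotomy gets swamped: both cases give essentially the same $G(Q)\asymp Q^{n+1}$, so the distinction that should drive the $(\log B)^\gamma$ versus $B^{-\delta}$ outcome disappears.

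The paper's route is genuinely different. It runs the Selberg sieve, but the decisive arithmetic input is an asymptotic formula for the number of points of $X$ in a box lying in a prescribed residue class modulo $q^m$, supplied by the circle method via a result of Schindler--Sofos \cite[lemma 2.1]{SS}. This gives a main term of the correct order $B_j^{N+1-d}$ (times the appropriate local densities) with a power-saving error; the hypothesis $N>2^d(d-1)$ is precisely Birch's range where the circle method achieves this. Feeding that level-of-distribution statement into the Selberg sieve yields the paper's \ref{sieve theorem}, whose $G(Q)$ is defined with denominator $\#\widehat{X^n}(\ZZ/p^m\ZZ)$ rather than $p^{n(N+1)}$, so that the type~I/type~II bounds on $\#\overline{A}_p$ (taken from \cite[lemma 3.8]{BL}) now produce the meaningful dichotomy $G(Q)\gg Q^{2-\epsilon}$ versus $G(Q)\gg Q(\log Q)^{-\gamma}$. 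Your instinct that $N>2^d(d-1)$ must be used quantitatively is correct, but the mechanism is the circle method for $X$-points in arithmetic progressions, not a saving internal to Kowalski's large sieve.
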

\begin{remark}
Let $x \in X^n(\QQ)$ and $x = (\mathbf{x}^{(1)}, \dots, \mathbf{x}^{(n)})$ for $\mathbf{x}^{(j)} \in \ZZ_{\text{prim}}^{N+1}$.
Since the anticanonical bundle on $X$ is $\mathcal{O}(N+1-d)$ (see e.g \cite[section 1]{FMT}), an anticanonical height function on $X^n$ is given by 
$$H(x) = \prod_{j=1}^n \vert \vert \mathbf{x}^{(j)} \vert \vert _{\infty}^{N+1 - d}.$$
\end{remark}
The result is a consequence of a sieve estimate for points on products of a hypersurface lying in some prescribed residue classes. In establishing this estimate we make crucial use of a recent generalisation of Birch's theorem due to Schindler--Sofos \cite{SS} (c.f. \ref{lopsided birch}).

Fix $m\in \NN$. Let 
\begin{equation*}\label{eq:classes}
\Omega_{p^m}  \subset 
\{(\x^{(1)}, \dots, \x^{(n)}) \in \l(\ZZ/p^m \ZZ \r)^{n(N+1)} : p \nmid \x^{(j)}, F(\x^{(j)}) \equiv 0 \Mod p^m \, \forall j\}
\end{equation*}
be some non-empty collection of residue classes for each prime $p$. Denote their relative density by
\[
\omega_p := 1 - \frac{\#\Omega_{p^m}}{\#\widehat{X^n}\l(\ZZ/p^m\ZZ\r)} \in [0,1),
\] where $\widehat{X^n}$ denotes the affine cone of $X^n$.
We will establish \ref{thin theorem} using a large sieve type estimate for points of bounded height of the following form.
\begin{lemma}
There exist $\delta_1, \delta_2 >0$ such that
\[
\#\{ x \in X(\QQ) : H(x) \leq B \text{ and } x \Mod p^m \in \Omega_{p^m} \forall p\}
\ll
\frac{B}{\min\{G(B^{\delta_1}),B^{\delta_2}\}},
\]
where
\[
G(Q) = \sum_{q \leq Q} \mu^2(q) \prod_{p \mid q} \frac{\omega_p}{1- \omega_p}.
\]
\end{lemma}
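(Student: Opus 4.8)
The plan is to reduce the lemma to a single application of the multi-dimensional large sieve, exactly as in the proof of \ref{P2 count}, but with the large-scale counting of integer points on the affine cone of $X$ now controlled by the Schindler--Sofos version of Birch's theorem in lopsided boxes (\ref{lopsided birch}) rather than by the trivial box-counting available for $\bb P^2$. First I would pass from $X(\QQ)$ to the affine cone $\widehat X \cap \ZZ^{N+1}_{\text{prim}}$: a point of height $\le B$ corresponds to a primitive integer vector $\x$ with $F(\x) = 0$ and $\norm{\x}_\infty^{N+1-d} \le B$, i.e. $\norm{\x}_\infty \le B^{1/(N+1-d)}$. Congruence conditions $x \Mod{p^m} \in \Omega_{p^m}$ pull back to congruence conditions on $\x$, and the relative densities $\omega_p$ are precisely the sieve weights. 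Because the box is a genuine cube here (the height is a single power of $\norm{\x}_\infty$), there is no lopsidedness for the sieve itself in the case $n=1$; the Schindler--Sofos input is needed to count the integer points on $\widehat X$ lying in each residue class modulo $q$ for squarefree $q \le Q$, with a power-saving error term uniform in $q$ up to some small power of $B$.

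The key steps, in order, are: (i) set up the sieve with moduli $q$ running over squarefrees up to a parameter $Q = B^{\delta_1}$; (ii) invoke \ref{lopsided birch} (the quantitative Birch/Schindler--Sofos asymptotic) to obtain, for each such $q$, an asymptotic $\#\{\x \in \widehat X \cap \ZZ^{N+1}_{\text{prim}} : \norm{\x}_\infty \le Y,\ \x \equiv \a \Mod q\} = c\, \sigma_\infty\, \prod_p \sigma_p \cdot Y^{N+1-d} + O(Y^{N+1-d-\theta})$ with $\theta>0$, valid uniformly for $q$ up to $Y^{\delta}$ — this is where the hypothesis $N > 2^d(d-1)$ is used, as it is exactly the range in which Birch's circle-method bound gives a genuine power saving; (iii) feed this into the large sieve inequality (in the sharp form of e.g.\ \cite[theorem 4.1]{Kow}), which bounds the sieved count by $(\text{main count})/G(Q)$ plus an error absorbed into the second branch $B^{\delta_2}$; (iv) assemble the two regimes $G(B^{\delta_1})$ and $B^{\delta_2}$ via a minimum, since when $G$ is large relative to $B^{\delta_2}$ the large-sieve error dominates and vice versa; (v) finally note that for $n \ge 2$ one takes products over the $n$ factors and the same lopsided-box argument as in \ref{P2 count} (dyadic decomposition, picking $Q$ to be a small power of the smallest side) produces the extra $(\log B)^{n-2}$ in place of $(\log B)^{3e-3}$, the $-2$ rather than $-1$ coming from the gain of one log-power out of $G(Q)$.

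I expect the main obstacle to be step (ii): extracting from Birch's theorem (in the Schindler--Sofos lopsided form) an asymptotic count of \emph{primitive} integer points on the affine cone of $X$ in a cube that is \emph{uniform in the modulus $q$} with a power-saving error term. Birch's original statement is for a fixed smooth complete intersection with no congruence conditions; one must run the circle method with the congruence condition built in (a standard but delicate modification, splitting the singular series into the $q$-part and the complementary part and bounding the former trivially by its size), track the dependence of every error term on $q$, and verify that the range $q \le Q = B^{\delta_1}$ is permissible for a suitable small $\delta_1$. The primitivity condition is handled by a routine M\"obius inversion over the content of $\x$, but one must check this does not destroy the uniformity in $q$ (it does not, since $\gcd$ considerations and the modulus $q$ can be kept coprime after a harmless adjustment). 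The passage from $n=1$ to $n \ge 2$ and the bookkeeping of the two-regime minimum are, by contrast, essentially the same as in the already-proven \ref{P2 count} and should be routine.
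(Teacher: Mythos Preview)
Your overall strategy is sound and matches the paper's in its essential ingredient: the Schindler--Sofos asymptotic (\ref{lopsided birch}) supplies exactly the equidistribution of $\widehat X$-points in residue classes mod $q$ needed to run a sieve, and you correctly flag the uniformity in $q$ as the main technical point. However, your step (iii) misidentifies the sieve. The large sieve in the form you cite (\cite[thm.~4.1]{Kow}) is a direct inequality for \emph{lattice points in a box}; it does not take ``asymptotic counts in residue classes'' as input, and it does not apply as stated to a sparse subset of $\ZZ^{N+1}$ such as $\widehat X \cap \ZZ^{N+1}_{\text{prim}}$. What you actually describe --- a main count divided by $G(Q)$, plus an error coming from summing remainder terms $r_q$ over $q \le Q^2$ --- is precisely the shape of Selberg's upper bound sieve, and that is what the paper uses (via \cite[thm.~7.1]{Opera}).

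Concretely, the paper introduces a sequence $(a_\lambda)$ indexed by values of an auxiliary function $\Lambda(\x)$, and applies \ref{lopsided birch} to the shifted form $g(\y) = F(\a + q^m \y)$ for each residue $\a \in \Omega_{q^m}^C$ to obtain $\sum_{\lambda \equiv 0 \Mod q} a_\lambda = c_2\, g(q)\, B^{\,N+1-d} + O(r_q)$ with $g(q) = \prod_{p \mid q}\omega_p$; this is exactly the Selberg-sieve hypothesis, and the lemma drops out as the $n=1$ case of the more general \ref{sieve theorem}. If you replace ``large sieve'' by ``Selberg sieve'' in your plan, it essentially coincides with the paper's argument. (Your aside about M\"obius inversion for primitivity is also unnecessary: the sets $\Omega_{p^m}$ already exclude vectors with $p \mid \x$, so primitivity is subsumed in the congruence conditions.)
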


\begin{remark}
This theorem is analogous to \cite[theorem 1.7]{BL} and all the results derived for quadrics concerning fibrations, zero loci of Brauer group elements and friable divisors could also be generalised to the setting of smooth hypersurfaces of low degree in a similar fashion.
\end{remark}
In general we will need to look at products of hypersurfaces, to deal with the resulting height condition we break into dyadic intervals as in the proof of \ref{P2 count}.
Then we need to investigate the subset $\mathbf{N}(X^n,\mathbf{B},\Omega)$ defined by
\[
\#\{
x \in \widehat{X^n}(\QQ): B_{j}/2 < \vert \vert \x^{(j)} \vert \vert_{\infty} \leq B_j \text{ and } \x^{(j)} \Mod p^m \in \Omega_{p^m} \forall p,j
\},
\]
where $(\x^{(1)}, \dots, \x^{(n)})$ is a representative of $x$ in $(\ZZ_{\text{prim}}^{N+1})^n$.
We have the following analogous estimate for products.
\begin{theorem}\label{sieve theorem} Suppose $N> 2^d(d-1) $.
Then for any $\epsilon >0$ and any $Q \geq 1$, one has
\[
\mathbf{N}(X^n,\mathbf{B},\Omega) \ll_{\epsilon, X} 
(B_1 \cdots B_n)^{N+1-d} 
\l( \frac{1}{G(Q)} + E_1 + E_2 \r)
,\] 
where
\begin{align*}
E_1 &= \min \limits_j \{B_j\}^{- \frac{1}{2}} Q^{m(2d-2n+1) +2 + \epsilon}\\
E_2 &=\min \limits_j \{B_j\}^{ - \frac{(N+1)2^{-d}-(d-1)}{4d} + \epsilon}Q^{
2-m(d+2n) + \frac{5md(N+1)}{2^{d-1}(d-1)} + \frac{(N+1)m2^{-d} - m(d-1)}{2d} - \epsilon
}.
\end{align*}

\end{theorem}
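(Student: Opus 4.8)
The plan is to reduce \ref{sieve theorem} to a lopsided version of Birch's circle method estimate for the number of integral points of bounded height on the affine cone $\widehat X$, subject to congruence conditions modulo $q^m$ for squarefree $q$. First I would dualise the sieve: the quantity $\mathbf N(X^n,\mathbf B,\Omega)$ counts points $(\x^{(1)},\dots,\x^{(n)})$ with $\x^{(j)}$ in the dyadic box of scale $B_j$, each satisfying $F(\x^{(j)})\equiv 0$, and each avoiding the "bad" residue classes cut out by $\Omega_{p^m}$. Since the $n$ factors are independent except that the constraints couple through the single modulus variable in the large sieve, I would set up the large sieve inequality: for each squarefree $q\le Q$ and each choice of bad classes modulo $q^m$ on each factor, the points are excluded, and summing over $q$ gives the denominator $G(Q)=\sum_{q\le Q}\mu^2(q)\prod_{p\mid q}\frac{\omega_p}{1-\omega_p}$. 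The main term of the count is $(B_1\cdots B_n)^{N+1-d}$ — the singular-series-times-singular-integral prediction for points on $\widehat{X^n}$ — divided by $G(Q)$, plus error terms coming from (a) the error in the asymptotic for $\#\{\x\in\widehat X: \x \text{ in a box of scale } B, \x\equiv \a \bmod q^m\}$, and (b) the contribution of the "large" conductor range where the asymptotic is not uniform in $q$.

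Concretely I would proceed as follows. Step one: record the uniform-in-$q$ asymptotic. Schindler--Sofos \cite{SS} give, for $N$ larger than $2^d(d-1)$ (the Birch bound adapted to lopsided boxes, quoted here as \ref{lopsided birch}), an estimate of the shape
\[
\#\{\x\in\ZZ^{N+1}: \x\in q^m\a + \text{box of scale }\B,\ F(\x)=0\} = \frac{\text{box volume}}{q^{m(N+1)}}\mathfrak S_\infty \mathfrak S_q(\a) + (\text{error}),
\]
where the error is a power saving in $\min_j B_j$ against a power of $q$; this is exactly what produces $E_1$, with the exponent $m(2d-2n+1)+2+\epsilon$ tracking the number of congruence variables and the Birch exponent. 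Step two: feed this into the standard large-sieve manipulation (as in Browning--Loughran \cite{BL}, Theorem 1.7) — one writes the indicator of "lies in $\Omega$ for all $p\mid q$" via characters modulo $q^m$, the main (principal character) terms assemble into $(B_1\cdots B_n)^{N+1-d}/G(Q)$ after invoking that $\prod_p(1-\omega_p)$-type products converge, and the non-principal contributions are bounded by the Birch error summed over $q\le Q$, giving $E_1$. Step three: handle the range of moduli too large for the asymptotic to be useful by a trivial bound, counting points in a box of scale $B_j$ on $\widehat X$ by the hypersurface dimension and the congruence saving; optimising where the switch happens between the two strategies yields the second error term $E_2$, whose messy exponents are exactly the break-even point of "Birch error" against "trivial box count with $q^m$-saving" — the appearance of $(N+1)2^{-d}-(d-1)$ and $2^{d-1}(d-1)$ in $E_2$ is the signature of the Browning--Sawin/Birch strength entering through \ref{lopsided birch}.

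The main obstacle, and where essentially all the work lies, is step one made uniform: I need Birch's theorem not for a single box but for genuinely lopsided boxes (side lengths $B_1,\dots,B_{n(N+1)}$ of wildly different sizes), with the congruence modulus $q^m$ allowed to grow as a power of $\min_j B_j$, and with completely explicit dependence of the error term on $q$. This is precisely the content of Schindler--Sofos \cite{SS}; the job is to quote it in the right normalisation, verify that the hypothesis $N>2^d(d-1)$ is what their result requires in this lopsided regime, and track the exponent of $q$ through the minor-arc estimate (each extra congruence variable costs a power of $q$, which is why $n$ appears as $-2n$ in both error exponents). A secondary technical point is that the "trivial" bound in step three must itself be nontrivial in the congruence aspect — one needs that the bad classes $\Omega_{p^m}$ genuinely live on the hypersurface, so that the box count is $O((\text{box volume})/q^{m}\cdot q^{-m\cdot(\text{codim})})$ rather than the naive $O(\text{box volume}/q^{m(N+1)})$ — and this is where smoothness of $X$ and the definition of $\Omega_{p^m}$ (supported on $p\nmid \x^{(j)}$, $F(\x^{(j)})\equiv 0$) get used. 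Once these two inputs are in hand, assembling the stated bound is bookkeeping: break into dyadic boxes as in the proof of \ref{P2 count}, apply the large sieve factor-by-factor, and collect the three contributions $1/G(Q)$, $E_1$, $E_2$.
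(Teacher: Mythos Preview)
Your broad architecture --- sieve plus a uniform-in-modulus Birch-type asymptotic from Schindler--Sofos --- matches the paper, but the detailed structure you describe does not. The paper uses Selberg's upper bound sieve (via \cite[theorem 7.1]{Opera}), not the large sieve with characters: one sets up a sequence $a_\lambda$ indexed by an auxiliary integer $\Lambda(\x)$ recording which primes $p$ have $\x\bmod p^m$ landing outside $\Omega_{p^m}$, and the sieve input required is an asymptotic of the form $\sum_{q\mid\lambda}a_\lambda=g(q)Y+r_q$. This is obtained by fixing a residue class $\a\in\Omega_M^C$ (with $M=q^m$), writing $\x=\a+M\y$, and applying \ref{lopsided birch} factor by factor in $j$; the ``lopsidedness'' in Schindler--Sofos is in the coefficient height $\|g\|$ of the shifted form $g(\y)=F(\a+M\y)$, not in the box shape (each box is a cube of side $B_j/M$).

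Your account of where $E_2$ comes from is the real gap. There is no ``step three'' handling large moduli by a trivial box count: both $E_1$ and $E_2$ arise from the \emph{two} error terms already present in \ref{lopsided birch}, namely the $R^{N-d+1/2}$ term and the $\|g\|^{\frac{5(N+1)}{2^d(d-1)}-\frac{3}{2}}R^{N+1-d+\epsilon-\frac{(N+1)2^{-d}-(d-1)}{4d}}$ term. After multiplying by $\#\Omega_M^C\ll M^{nN}$ and summing over $q\le Q^2$ (the Selberg sieve level), these two terms yield exactly $E_1$ and $E_2$ respectively --- the exponents you found mysterious in $E_2$ are simply the Schindler--Sofos minor-arc exponent and the coefficient-height exponent propagated through $\|g\|\asymp M^d$. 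There is also no dyadic decomposition inside this proof; $\mathbf B$ is fixed in the statement, and the dyadic sum happens only later in the derivation of \ref{thin theorem}.
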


Now assuming \ref{sieve theorem}, we'll demonstrate how to establish the main result.
\begin{proof}[Proof of \ref{thin theorem}]
For each $p$ denote by $\overline{A}_p$ the reduction modulo $p$ of the affine cone of $A$ (as in the proof of lemma \ref{P2 count}) and by $\overline{A}$ the collection of all these reductions. 
We start by breaking into dyadic intervals
\[
\#\{ x \in A: H(x) \leq B\}
\leq 
\sum_{\substack{B_1 \cdots B_n \leq B^{\frac{1}{N+1-d}}\\ \text{dyadic}}}
\mathbf{N}(X^n, \mathbf{B}, \overline{A}).
\]
It suffices to prove the estimate in \ref{thin theorem} when $A$ is either a type {\it I} or type {\it II} thin set. This will follow from the $m=1$ case of \ref{sieve theorem}.
If $A$ is a type {\it I} thin set then there is some proper, Zariski closed subset $Z \subset X^n$ which describes it. By \cite[lemma 3.8]{BL} for all primes $p$, we have 
$\# Z(\FF_p) \ll_{Z} p^{n(N+1)-1}.$
It follows that there exists a constant $c>0$ such that
$\omega_p \geq 1 - \frac{c}{p},$
and thus
$\frac{\omega_p}{1- \omega_p} \geq \frac{p}{c} -1.$
This means
\[
G(Q) \geq \sum_{q \leq Q} \mu^2(q)q \prod_{p \mid q} \l( \frac{1}{c} - \frac{1}{p}\r)\gg_{\epsilon, Z} Q^{2 - \epsilon}.
\]

Similarly, if $A$ is a type {\it II} thin set then \cite[lemma 3.8]{BL} implies there is a positive density set of primes $\mathcal{P}$ and a constant $\eta >0$ such that 
$
\frac{\omega_p}{1- \omega_p} \geq \eta,
$ for large enough $p \in \mathcal{P}$.
It follows, as in the proof of \ref{P2 count}, that there exists $\gamma \in [0,1)$ such that
$$
G(Q) \gg_{\epsilon, \mathcal{P}} Q (\log Q)^{- \gamma}
.$$
In either case, \ref{sieve theorem} (with $m=1$) implies that
\[
\mathbf{N}(X^n, \mathbf{B}, \overline{A})
\ll_{A, X}
(B_1 \cdots B_n)^{N+1-d} \l(  Q^{-1}(\log Q)^{\gamma} + E_1 + E_2    \r).
\]
Now setting $Q=\min \limits_j \{B_j\}^{\delta}$ for $\delta >0$ sufficiently small gives the bound
\[
\#\{x \in A : H(x) \leq B\}
\ll_{A,X}
(\log B)^{\gamma}\!\!
\sum_{\substack{B_1 \cdots B_n \leq B^{\frac{1}{N+1-d}}\\ \text{dyadic}}}
(B_1 \cdots B_n)^{N+1-d} \min \limits_j B_j^{-\delta},
\] from which the result follows.
\end{proof}

The rest of this section is dedicated to the proof of \ref{sieve theorem}. 
We count points $x \in X^n(\QQ)$ via their representatives $\x = (\x^{(1)}, \dots, \x^{(n)})\in \ZZ^{n(N+1)}$ where the ${\x^{(j)}}$ are primitive vectors with $F(\x^{(j)}) =0$. Passing to the affine cone, we see that we may bound $\mathbf{N}(X^n,\mathbf{B}, \Omega)$ by
\[
\#\{
\x \in \l(\ZZ_{\text{prim}}^{N+1}\r)^n: B_j/2<\vert\vert \x^{(j)} \vert\vert \leq B_j, F(\x) =0 \text{ and } \x \Mod p^m \in \Omega_{p^m} \forall p
\}.
\]
This quantity can be bounded above using the Selberg sieve.
Let
\[
P(Q) = \prod \limits_{\substack{
p < Q\\
\omega_p >0\\
p \nmid \text{disc}(F)
}}p
\ \ \text{   and   } \ \
\Lambda(\x) =
\prod \limits_{\substack{
p \mid P(Q)\\
\x \Mod p^m  \in \Omega^C_{p^m}
}}p
,\]
where
$
{\Omega_{p^m}^C = \widehat{X^n} \left( \ZZ/p^m \ZZ \right) \setminus \Omega_{p^m}}.
$

Define a sequence $\mathcal{A} = (a_{\lambda})$ of non-negative numbers, supported on finitely many integers $\lambda$, by
\[
a_{\lambda} =\sum_{\substack{
\x \in (\ZZ_{\text{prim}}^{(N+1)})^n\\
F(\x^{(j)}) = 0\\
\Lambda(\x) = \lambda
}}  \prod_{j=1}^n \prod_{i=0}^N W\l( \frac{x_i^{(j)}}{B_j}\r),
\]
for $W$ some appropriate smooth, compactly supported weight function.
Then,
\[
\sum_{(\lambda,P(Q))=1} a_{\lambda} = \!\!\!\!\!
 \sum_{\substack{
\x \in (\ZZ_{\text{prim}}^{(N+1)})^n\\
F(\x^{(j)})=0\\
(\Lambda(\x), P(Q))=1
}}  \prod_{j=1}^n \prod_{i=0}^N W\l( \frac{x_i^{(j)}}{B_j}\r) 
= \!\!\!\!\!
\sum_{\substack{
\x \in (\ZZ_{\text{prim}}^{(N+1)})^n\\
F(\x^{(j)})=0\\
\x \Mod p^m \in \Omega_{p^m} \forall p \mid P(Q)
}}\!\!\!\!  \prod_{j=1}^n \prod_{i=0}^N W\l( \frac{x_i^{(j)}}{B_j}\r)\!\!
.\]
\Cref{sieve theorem} will follow from a suitable upper bound for the $a_{\lambda}$ sum. This is achieved by an appeal to Selberg's upper bound sieve as expressed in \cite[theorem 7.1]{Opera}. In order to apply this, we need an expression of the form
\[
\sum_{\lambda \equiv 0 \Mod q} a_{\lambda} = g(q) Y + r_k(\mathcal{A}),
\] for a constant $Y$ and suitable multiplicative function $g$ and small remainder term $r_q(\mathcal{A})$. This information will be provided by the following result of Schindler--Sofos \cite[lemma 2.1]{SS}.
\begin{lemma}\label{lopsided birch}
Let $g \in \ZZ[x_0, \dots, x_N]$ a polynomial of degree $d \geq 2$. Fix $R >0$ and $\z \in \ZZ^{N+1}$. If ${N> 2^d (d-1)}$, then one has
\begin{align*}
\sum_{\substack{
\x \in \ZZ^{N\!+1}\\
g(\x) =0
}} \prod_{i=0}^N\! W\!\l( \frac{x_i}{R} - z_i\r) \!
\!-\!
\mathfrak{S} \mathfrak{J}_{W}\!
\ll_{\epsilon} \!
 R^{N-d + \frac{1}{2}}+\!
\norm{g}^{ \frac{5(N+1)}{2^{d}(d-1)}  -\frac{ 3 }{ 2 }  } 
R^{N\!+1 -d + \epsilon - \frac{(N\!+1)2^{-d}-(d-1)}{4d}}\!
,\end{align*}
where
\begin{align*}
\mathfrak{J}_{W} &= \int_{-\infty}^{\infty} \int_{\RR^{N+1}} e(\gamma g(\u)) \prod_{i=0}^{N}W
\l(\frac{u_i}{R} \r) \mathrm{d}\u \mathrm{d}\gamma\\
\mathfrak{S} &= \prod_p \sigma_p(g).
\end{align*}
Here $\sigma_p$ is the local density defined as
\[
\sigma_p(g) := \lim_{\ell \rightarrow \infty} p^{-\ell N} \#\{
\x \Mod p^{\ell} : g(\x) \equiv 0 \Mod p^{\ell}
\}
.\]

\end{lemma}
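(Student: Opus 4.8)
The plan is to prove this by the Hardy--Littlewood circle method, following Birch's treatment of forms in many variables but keeping explicit track of the dependence on the coefficient size $\norm{g}$ and accommodating the smooth, shifted weight $W$; this is precisely the content of \cite[lemma 2.1]{SS}, so I only outline the structure. First, since $g(\x)\in\ZZ$, one has $\int_0^1 e(\gamma g(\x))\,\mathrm d\gamma = 1$ when $g(\x)=0$ and $0$ otherwise, so the left-hand sum equals $\int_0^1 S(\gamma)\,\mathrm d\gamma$, where
\[
S(\gamma)=\sum_{\x\in\ZZ^{N+1}} e(\gamma g(\x))\prod_{i=0}^N W\!\l(\frac{x_i}{R}-z_i\r).
\]
Since $W$ is smooth and compactly supported this sum is finite and $S$ is a well-behaved periodic function. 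One then dissects $[0,1)$ into major arcs around rationals $a/q$ with $q$ at most a small power of $R$, and a minor-arc complement, the dissection exponents to be optimised at the very end.

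\textbf{Minor arcs.} Here one bounds $\abs{S(\gamma)}$ by Birch--Weyl differencing: differencing $g$ a total of $d-1$ times linearises the top-degree part $g_d$ and reduces the sum to one governed by the number of solutions of an associated multilinear system of congruences. The hypothesis $N+1 > 2^d(d-1)$, together with the standard bound for the dimension of the affine singular locus of $g_d$ --- which for a general $g$ is controlled only at the cost of a factor polynomial in $\norm{g}$ --- is exactly what makes the resulting power of $R$ drop strictly below $R^{N+1-d}$, producing the second term in the claimed error. It is in this step that $\norm{g}$ enters, both through the differenced congruence counts and through the measure of the minor arcs, and one must extract the precise exponents $\tfrac{5(N+1)}{2^{d}(d-1)}-\tfrac32$ of $\norm{g}$ and $\epsilon-\tfrac{(N+1)2^{-d}-(d-1)}{4d}$ of $R$.

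\textbf{Major arcs.} Near $a/q$ one replaces $S(\gamma)$ by the product of a complete exponential sum modulo $q$ and an oscillatory integral; summing the former over $a$ and $q$ builds the truncated singular series, while the latter converges to $\mathfrak J_W$. Completing the singular series to all $q$ --- the tail again controlled by the many-variables hypothesis --- yields the main term $\mathfrak S \mathfrak J_W$ with an error of size $R^{N-d+\frac12}$, and the local densities $\sigma_p(g)$ are the usual $p$-adic limits whose product $\mathfrak S$ converges for the same reason. Combining the minor-arc bound with the major-arc expansion and optimising the dissection parameters as powers of $R$ gives the stated asymptotic.

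\textbf{Main obstacle.} Essentially all of the difficulty is in making the minor-arc analysis uniform in $\norm{g}$: Birch's original estimates are not, so one must redo the differencing and the congruence counts tracking how the (possibly large) singular locus of $g_d$ and the coefficient size interact, and then balance these against the major-arc error when choosing the dissection. Getting the exponents sharp enough that the downstream Selberg-sieve application in \ref{sieve theorem} goes through is where the real work lies.
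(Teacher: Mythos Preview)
The paper does not supply its own proof of this lemma; it is simply quoted as \cite[lemma~2.1]{SS} and used as a black box in the Selberg-sieve argument. Your outline --- Birch's circle method with the minor-arc Weyl differencing made uniform in $\norm{g}$ and the major-arc analysis giving $\mathfrak S\mathfrak J_W$ with error $R^{N-d+\frac12}$ --- is exactly the structure of the Schindler--Sofos proof, so there is nothing to compare: you and the paper agree that this is an imported result, and your sketch of how it is established is accurate.
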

Let $M=q^m$ and $\Omega_M = \prod_{p^m \mid \mid M} \Omega_{p^m}.$
Then for $q \mid P(Q)$, we have
\begin{align*}
\sum_{\lambda \equiv 0 \Mod q} a_{\lambda} =& 
\sum_{\substack{
\x \in (\ZZ_{\text{prim}}^{(N+1)})^n\\
F(\x^{(j)})=0\\
\x \Mod M \in \Omega_M^C
}}  \prod_{j=1}^n \prod_{i=0}^N W\l( \frac{x^{(j)}_i}{B_j}\r) \\
=&
 \sum_{\a \in \Omega_M^C}
\sum_{\substack{
\y \in (\ZZ_{\text{prim}}^{(N+1)})^n\\
F(\a^{(j)} + M\y^{(j)}) =0
}}  \prod_{j=1}^n \prod_{i=0}^N W\l( \frac{a_i^{(j)} + M y_i^{(j)}}{B_j}\r)\\
=&
 \sum_{\a \in \Omega_M^C}
 \prod_{j=1}^n
 \sum_{\substack{
\y^{(j)} \in (\ZZ_{\text{prim}}^{(N+1)})^n\\
F(\a^{(j)} + M\y^{(j)}) =0
}} \prod_{i=0}^N  W\l( \frac{a_i^{(j)} + M y_i^{(j)}}{B_j}\r)
.
\end{align*} 
Now this is in a form where we may apply \ref{lopsided birch}, setting $R = \frac{B_j}{M}$, $ z_i = \frac{a_i^{(j)}}{B_j}$ and $g(\y^{(j)}) = F(\a^{(j)} + M \y^{(j)})$.
Therefore the inner sum over $\y^{(j)}$ can be written as
\begin{align*}
\mathfrak{S} \mathfrak{J}_{W,h_j}
+ O\l(   \l(     \frac{B_j}{M}     \r)^{N-d + \frac{1}{2}}
+
M^{ \frac{5d(N+1)}{2^{d}(d-1)} - \frac{3d}{2} } 
\l(   \frac{B_j}{M}    \r)^{N+1 -d + \epsilon - \frac{(N+1)2^{-d}-(d-1)}{4d}}
\r)
.\end{align*}
Observe that
\begin{align*}
\mathfrak{J}_{W,B_j} 
&= \frac{B_j^{N+1-d}}{M^{N+1}} \int_{-\infty}^{\infty} \int_{\RR^{N+1}} 
e(\beta F(\u))  \prod_{i=0}^N W\l( u_i\r) \mathrm{d}\u \mathrm{d}\beta
=:\frac{B_j^{N+1-d}}{M^{N+1}} \mathfrak{J} .
\end{align*}

The local factors $\sigma_p$ in the singular series are given by
\[
\sigma_p((F(\a^{(j)} + p^m \y^{(j)})) = 
\lim_{\ell \rightarrow \infty} p^{-\ell N} \# \{ \y \Mod p^{\ell}: F(\a^{(j)} +p^m \y^{(j)}) \equiv 0 \Mod p^{\ell} \}.
\]
If $p \nmid M$ then $
\sigma_p(F(\a^{(j)} + p^m\y^{(j)})) = \sigma_p,
$ where $\sigma_p$ is the usual Hardy--Littlewood density associated to $F$.
If $p$ divides $M$, it cannot divide disc$(F)$. Let 
$$\mathbf{N}(\ell) :=  \# \{ \y \Mod p^{\ell}: F(\a^{(j)} +p^m \y) \equiv 0 \Mod p^{\ell} \},$$ it follows via Hensel's lemma that for $\ell > m$ we have
$
 \mathbf{N}(\ell) = p^N  \mathbf{N}(\ell-1),
$ and thus
$
\sigma_p(F(\a^{(j)} + p^m\y)) = p^m.
$
Hence, the singular series factorises as
\[
\mathfrak{S} = \prod_{p \nmid M} \sigma_p \prod_{p^m \mid \mid M} p^m,
\] for any $\a^{(j)}$.
Therefore
\[
\frac{\mathfrak{S}}{M^{N+1}} = \prod_p \sigma_p \prod_{p^m \mid \mid M} \frac{1}{p^{mN}\sigma_p}.
\]
It follows from Hensel's lemma (as above) that $\#{\widehat{X}(\ZZ/p^{\ell}\ZZ) = p^N \#\widehat{X}(\ZZ/p^{\ell-1}\ZZ)} $ for any $\ell>1$. Using this and Deligne's bound, we conclude
\[
\frac{\mathfrak{S}}{M^{N+1}} = \prod_p \sigma_p \!\prod_{p^m \mid \mid M} \frac{1}{\#\widehat{X}\l( \ZZ/p^m\ZZ\r)\l(1 + O(p^{-N/2+1/2}\r))} = c_1 \!\prod_{p^m \mid \mid M} \frac{1}{\#\widehat{X}\l( \ZZ/p^m\ZZ\r)},
\] for some absolute constant $c_1$. Taking the product over all $j$ we get a  main term of size
\[
\#\Omega_M^C c_1 ^n \mathfrak{J}^n \prod_{p^m \mid \mid M} \frac{1}{\#\widehat{X^n}\l( \ZZ/p^m\ZZ\r)}
(B_1 \cdots B_n)^{N+1-d}
.\]
Therefore, there exists a constant $c_2$ (depending at most on $W$ and $F$) such that
\[
\sum_{\lambda \equiv 0 \Mod q} a_{\lambda} =  c_2g(q)(B_1 \cdots B_n)^{N+1-d} + 
O\l(r_q(\mathcal{A})\r)
,\]
 where
\[
g(q) =  \prod_{p \mid q} 
\frac{\#\Omega_{p^m}^C}{\#\widehat{X^n}\l(   
\ZZ/p^m\ZZ
 \r)}
= \prod_{p \mid q} \omega_p.
\]
The remainder term $r_q(\mathcal{A})$ is given by $\#\Omega_{M}^C \left( \frac{B_1 \cdots B_n}{M^n}\right)^{N+1-d}$ multiplied by
\[
\l(\frac{\min \limits_j \{B_j\}}{M}\r)^{\!\! - \frac{1}{2}}\!
M^{-d(n-1)}
+M^{ \frac{5d(N\!+1)}{2^{d}(d-1)} - \frac{3d}{2} - d(n-1) } \!\!
\l(   \frac{\min \limits_j \{B_j\}}{M}    \r)^{ - \frac{(N\!+1)2^{-d}-(d-1)}{4d} + \epsilon}
\!\!.
\]

We estimate $\#\Omega_M^C$ using the following simple bound \[ {\#\Omega_M^C \ll \prod_{p^m \mid \mid M} \#\widehat{X^n}(\ZZ/p^m\ZZ)\ll M^{nN}.}\]
It just remains to compute the error terms
\begin{align*}
\frac{(B_1 \cdots B_n)^{N+1-d}}{\min \limits_j \{B_j\}^{1/2}}
\sum_{q \leq Q^2}\! \tau_3(M) &
q^{m(d+\frac{1}{2}-n(N+1))} \#\Omega_{q^m}^C\\
&\ll_{\epsilon} 
\frac{(B_1 \cdots B_n)^{N+1-d}}{\min \limits_j \{B_j\}^{1/2}} \!\!
\sum_{q \leq Q^2} \!\! q^{m(d+ \frac{1}{2} -n) + \frac{\epsilon}{2}}\\
&\ll_{\epsilon} \frac{(B_1 \cdots B_n)^{N+1-d}}{\min \limits_j \{B_j\}^{1/2}} Q^{m(2d-2n +1)+2 + \epsilon}
\end{align*}
and
\begin{align*}
&\frac{(B_1 \cdots B_n)^{N+1-d}}{\min \limits_j \{B_j\}^{\frac{(N+1)2^{-d}-(d-1)}{4d}-\epsilon}}
\sum_{q \leq Q^2}\! \tau_3(M)  q^{m\l(\frac{5d(N+1)}{2^{d}(d-1)} -  
 \frac{d}{2} -n(N +1) + \frac{(N+1)2^{-d}-(d-1)}{4d} - \epsilon \r)  } \#\Omega_{q^m}^C\\
&\quad{}\ll_{\epsilon}
\frac{(B_1 \cdots B_n)^{N+1-d}}{\min \limits_j \{B_j\}^{\frac{(N+1)2^{-d}-(d-1)}{4d}-\epsilon}}
Q^{
2-m(d+2n) + \frac{5md(N+1)}{2^{d-1}(d-1)} + \frac{(N+1)m2^{-d} - m(d-1)}{2d} - \epsilon
}.
\end{align*}
This finishes the proof of \ref{sieve theorem}.

\def\cprime{$'$}

\end{document}